\documentclass[a4paper,11pt,twoside]{article}



\usepackage{graphicx}
\usepackage{caption}
\usepackage{subcaption}
\usepackage{multirow}
\usepackage[utf8]{inputenc}
\usepackage[T1]{fontenc}
\usepackage{ dsfont }
\usepackage[english]{babel}


\usepackage{charter}


\frenchspacing
\usepackage{indentfirst}
\usepackage{xcolor}
\usepackage{verbatim}

\usepackage{hyperref}

\usepackage{pifont}

\usepackage[top=3.cm, bottom=4.0cm, left=2.2cm, right=2.2cm]{geometry}
\usepackage{amsmath}
\usepackage{amsthm}	
\usepackage{amsfonts}	
\usepackage{amssymb	
            ,bbm
            ,units 
            ,stmaryrd
           }
\usepackage{enumerate}
\usepackage{ textcomp, gensymb }
\usepackage[square,numbers,sort&compress]{natbib}




\usepackage{
            ulem		
           ,soul		
} \normalem

\numberwithin{equation}{section}
\numberwithin{figure}{section}
 \usepackage[nodayofweek]{datetime}

\newcommand{\real}{{\textrm{Re}}}
\newcommand{\imag}{{\textrm{Im}}}

\title{An iterative method for Helmholtz boundary value problems arising in wave propagation}

\author{
	\normalsize Francisco Bernal\textit{${^{a,}}{{}^*}$} \\
	\small franciscomanuel.bernal@uc3m.es
	\and
	\normalsize Xingyuan Chen\textit{$^{b}$} \\
	\small   X.Chen-176@sms.ed.ac.uk 
	\and
	\normalsize Gon\c calo dos Reis\textit{$^{b,c,}$}\footnote{F.B. acknowledges support from the Madrid Regional Government through project 2018-T1/TIC-10914 and by the Spanish AEI grant PID2020-115088RB-I00. G.d.R. acknowledges support from the \emph{Funda{\c c}$\tilde{\text{a}}$o para a Ci$\hat{e}$ncia e a Tecnologia} (Portuguese Foundation for Science and Technology) through the project UIDB/00297/2020 and UIDP/00297/2020 (Center for Mathematics and Applications, CMA/FCT/UNL).} \\
	\small  G.dosReis@ed.ac.uk
}

\date{%
	\footnotesize 
	$^{a}$~Departament of Mathematics, Carlos III University of Madrid, Spain
	\\
	$^{b}$~School of Mathematics, University of Edinburgh, The King's Buildings, Edinburgh, UK
	\\
	$^{c}$~Centro de Matem\'atica e Aplica\c c$\tilde{\text{o}}$es (CMA), FCT, UNL, Portugal
	\\
	\longdate \today \ (\currenttime)
	\vspace{-1.0cm}
}




\theoremstyle{plain}
\newtheorem{theorem}{Theorem}[section]

\newtheorem{remark}[theorem]{Remark}

\newcommand{\bE}{\mathbb{E}}

\newcommand{\bN}{\mathbb{N}}
\newcommand{\bP}{\mathbb{P}}

\newcommand{\bR}{\mathbb{R}}




\newcommand{\EE}{\mathbb{E}}



\definecolor{darkgreen}{rgb}{0,0.35,0}





\newcommand{\1}{\mathbbm{1}}

\newcommand{\dd}{\mathrm{d}}

\newcommand{\bn}{ \mathbf{n} }
 


\begin{document}

\selectlanguage{english}

\maketitle

\begin{abstract}
The complex Helmholtz equation $(\Delta + k^2)u=f$ (where $k\in{\mathbb R},u(\cdot),f(\cdot)\in{\mathbb C}$) is a mainstay of computational wave simulation. Despite its apparent simplicity, efficient numerical methods are challenging to design and, in some applications, regarded as an open problem. Two sources of difficulty are the large number of degrees of freedom and the indefiniteness of the matrices arising after discretisation. Seeking to meet them within the novel framework of probabilistic domain decomposition, we set out to rewrite the Helmholtz equation into a form amenable to the Feynman-Kac formula for elliptic boundary value problems. We consider two typical scenarios, the scattering of a plane wave and the propagation inside a cavity, and recast them as a sequence of Poisson equations. By means of stochastic arguments, we find a sufficient and simulatable condition for the convergence of the iterations. 
Upon discretisation a necessary condition for convergence can be derived by adding up the iterates using the harmonic series for the matrix inverse---we illustrate the procedure in the case of finite differences. 

From a practical point of view, our results are ultimately of limited scope. Nonetheless, this unexpected --- even paradoxical --- new direction of attack on the Helmholtz equation proposed by this work offers a fresh perspective on this classical and difficult problem. Our results show that there indeed exists a predictable range $k<k_{max}$ in which this new ansatz works with $k_{max}$ being far below the challenging situation. 
\end{abstract}

\paragraph{Keywords.} Helmholtz equation, wave simulation, iterative method for PDEs, Feynman-Kac formula, domain decomposition.

\small
\tableofcontents
\normalsize

\section{Introduction}\label{S:Intro}
\paragraph{Motivation.} The Helmholtz partial differential equation (PDE) is ubiquitous in acoustics, quantum mechanics, seismology, electromagnetism, and in any field concerned with the theoretical study or numerical simulation of waves  \cite{Courant&Hilbert}. It is a complex-valued, elliptic PDE which governs the Fourier transform of the solution of the wave function in a bounded or unbounded domain $\Omega\subset {\mathbb R}^D$ 
in dimension $D=1,2,3$, and is supplemented with the appropriate boundary conditions (BCs) on $\partial\Omega$ or at infinity
\footnote{See the lecture notes by O.~Runborg, {\em www.csc.kth.se/utbildning/kth/kurser/DN2255/ndiff12/Lecture5.pdf}, for an excellent introductory text.}. We shall  consider boundary value problems (BVPs) of the form
\begin{align}
\label{eq:H0}	
	\Delta u({\bf x}) + k^2 u({\bf x})= f({\bf x})\ \textrm{ if } \ {\bf x}\in\Omega\subset{\mathbb R}^D
 \quad + \quad 
 \textrm{ linear BCs on }\partial\Omega,
\end{align}
where $\Delta$ is the Laplacian operator and $k\in{\mathbb R}$, but $f({\bf x})$ and $u({\bf x})$ are complex valued functions. Intuitively, $f$ is a source that injects time-harmonic waves of angular frequency $\omega$ into $\Omega$, and the solution $u$ is the amplitude of the wave field $v(t,{\bf x})=u({\bf x})e^{-i\omega t}$ (with $i$ standing for the imaginary unit). The wave number $k$ is proportional to the frequency as $k=\omega/c$, where $c$ is the speed of the wave in the medium (also assumed constant). Although non-constant and non-linear coefficients are usual in applications, for the purpose of this article, it suffices to consider \eqref{eq:H0}. For simplicity, we set $c=1$ in the remainder of the article so that $k=\omega$.

Notwithstanding its simplicity, \eqref{eq:H0} can be numerically challenging---even intractable \cite{Zienkiewicz_Unsolved}. To see why, let us focus on two situations: the {\em cavity problem} and the {\em scattering problem}. When $\Omega$ is a bounded simply connected domain (a cavity), \eqref{eq:H0} is well-posed as long as $k^2$ is not an eigenvalue of the Laplacian in $\Omega$ with the same BCs -- the associated frequency is a resonant frequency of $\Omega$. The numerical simulation is delicate when $k^2$ is close to resonance, and this issue is compounded at high frequencies (and in three dimensions) since the density of resonant frequencies to $k$  
grows with $k^{D-1}$ \cite{Carleman}. Resonances disappear if a damping term $+ipu$ (for $p>0$) is included on the left-hand side of \eqref{eq:H0}. Damping is a common physical occurrence that attenuates the amplitude as the wave travels. Including artificial damping is the basis of some numerical methods since the underdamped solution is the viscosity solution of the damped one (i.e.~as $p\to 0^+$ -- see footnote).
\medskip

This paper is mostly motivated by the problem sketched on Figure \ref{F:Scattering} (left), where $f\equiv 0$ and a plane wave $v_{in}(t,{\bf x})=A_{in}e^{-ikx_1}e^{-i\omega t},~{\bf x}=(x_1,x_2)$, $A_{in}\in{\mathbb R}$, emitted  from infinity is scattered by an obstacle $\Omega_{sc}$; without loss of generality, we set $D=2$, and the $x_1$ axis is chosen as the incidence direction. 
\begin{figure}[hbt]
	\centering
\includegraphics[width=0.8\textwidth]{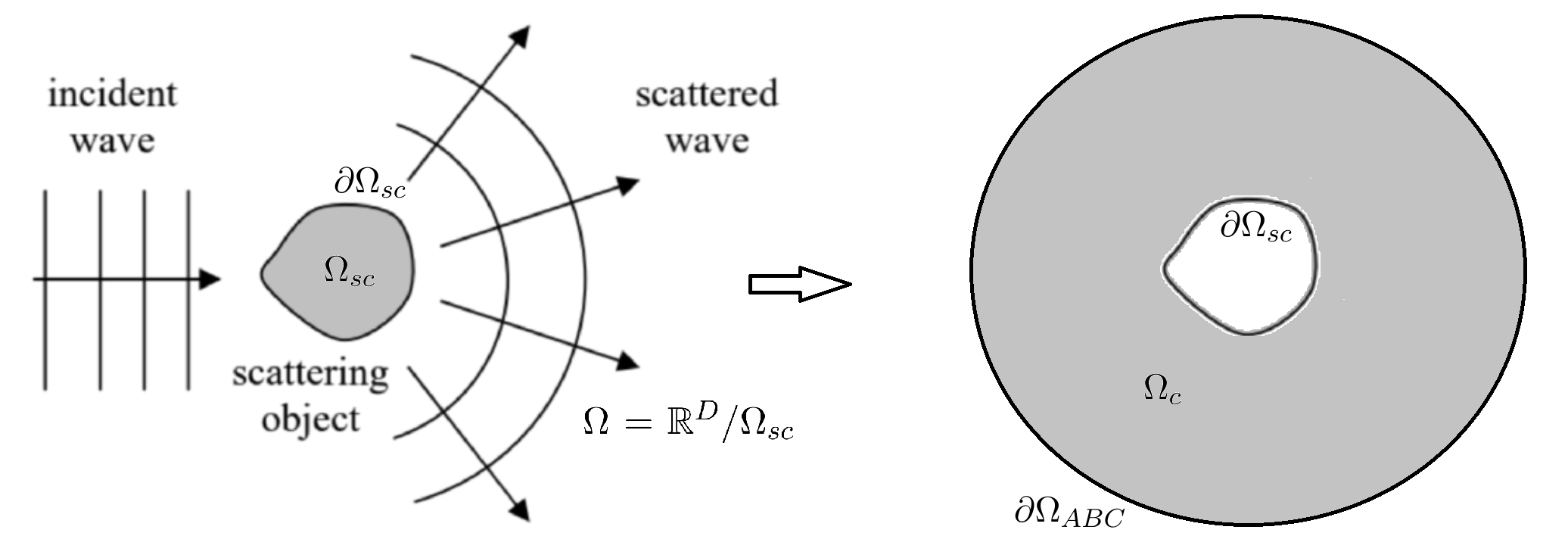}
\caption{Scattering of a plane monochromatic wave (left). The complex Helmholtz BVP, which governs the scattered amplitude, is defined on the exterior of the scatterer $\Omega_{sc}$, but numerically solved in the annular domain $\Omega_c$ between $\partial\Omega_{sc}$ and the computational boundary $\partial\Omega_{ABC}$ (right).}
\label{F:Scattering}
\end{figure}

The total wave field $v(t,{\bf x})$ is the superposition of the incoming and the scattered wave, i.e. $v(t,{\bf x})=u({\bf x})e^{-i\omega t}= v_{in}(t,{\bf x})+u_{sc}({\bf x})e^{-i\omega t}$. Since both $A_{in}e^{-ikx_1}$ (by inspection) and $u({\bf x})$ obey \eqref{eq:H0}, so does the amplitude of the scattered component to the wave field, $u_{sc}({\bf x})$, in the unbounded domain $\Omega={\mathbb R}^D/\Omega_{sc}$ \cite{Harari_BC}. The BCs for $u({\bf x})$ on the scatterer's surface $\partial\Omega_{sc}$ are homogeneous and---depending on the physical situation---they can be of either Dirichlet or Neumann type. 
Consequently, on $\partial\Omega_{sc}$, either $u_{sc}=-A_{in}e^{-ikx_1}$, or $\partial u_{sc}/\partial {\bf n}= -\partial(A_{in}e^{-ikx_1})/\partial {\bf n}$ (where $\partial/\partial {\bf n}$ denotes the normal derivative). 
This annular problem for $u_{sc}$ is well-posed as long as the Sommerfeld radiation condition is upheld and which reads in ${\mathbb R}^2$ as 
\begin{align}
\label{eq:Sommerfeld}	
	\lim\limits_{||{\bf x}||\to\infty}||{\bf x}||^{1/2}\left(\frac{\partial u_{sc}}{\partial r}({\bf x})-iku_{sc}({\bf x})\right)=0,
\end{align}
(where $||{\bf x}||$ and $\partial/\partial r$ are the radial component and derivative, respectively). The condition's meaning is to filter out the (unphysical) scattered waves coming in from infinity.

In order to construct a numerical solution for $u_{sc}({\bf x})$, the scattering  problem is posed on a finite computational domain $\Omega_c\subset \Omega$, such that the artificial computational boundary $\partial\Omega_{ABC}$ lies far out from the scatterer and thus \eqref{eq:H0} is solved in practice on the annular region between $\partial\Omega_{sc}$ and $\partial\Omega_{ABC}$. 
Absorbing BCs (ABCs) are then required on $\partial\Omega_{ABC}$, which are geared to prevent impinging waves from reflecting back into the computational domain \cite{Givoli_BCs}. Assuming that the computational boundary $\partial\Omega_{ABC}$ is a ball centred at the scatterer, and distant enough that the scattered waves can be locally approximated by a plane wave on each point of $\partial\Omega_{ABC}$, a simple yet accurate ABC is the homogeneous Robin BC
\begin{align}
\nonumber
\frac{\partial u_{sc}({\bf x})}{\partial r}-iku_{sc}({\bf x})=0,	
\end{align}
which can be thought of as the BC \eqref{eq:Sommerfeld} enforced on $\partial\Omega_c$ \cite{Harari_BC}.

Regardless of the artificial BCs, the  discretisation of $\Omega_c$ must ensure a sufficient density of nodes throughout so that the numerical approximation can capture the oscillations of the solution. To attain this, the linear density of nodes (in any direction) must be inversely proportional to the solution wavelength, i.e. proportional to $k$. Since the numerical error of \eqref{eq:H0} is dominated by dispersive---rather than dissipative---effects, truncation errors are accumulated over the wavelengths of the numerical solution, causing them to be $k$ times larger than for diffusion-dominated PDEs---this is the notorious pollution error \cite{pollution}. (For instance, with a standard second-order finite differences scheme, the error goes as $k^3$ instead of $k^2$.)  
High-order discretisations, such as spectral methods, are more resilient or even immune to this effect in practice \cite{Meshless}\cite{waveguide}, but rely on wider stencils and so lead to significantly larger matrix bandwidth. In sum, the tradeoff between accuracy, on one side, and size and sparsity of the discretisation matrix, on the other, is worse for wave problems than it is for diffusive ones, and this handicap grows with the frequency / wavenumber.   

The huge algebraic systems arising from realistic simulations \cite{smith1996parallel} must, by default, be iteratively solved on a parallel computer by means of domain decomposition. 
With Helmholtz BVPs for wave problems, the joint requirements for both a large computational domain and adequate sampling lead to notoriously high degrees of freedom (DoFs), and this number (say $N\gg 1$) grows superlinearly with $k$. Assuming that the iterative algebraic method converges in $N$ cycles (as many as there are DoFs), and that the complexity per cycle is proportional to the number of nonzero matrix elements, the cost goes as $N\times N^{D}=N^{1+D}$. Yet achieving convergence in ${\cal O}(N)$ iterations takes efficient preconditioning, which is far from given with Helmholtz's equation since the non-definiteness of the discretisations to \eqref{eq:H0} means that standard, well-proven preconditioners often fail, especially as $k$ grows \cite{Gander} -- this is presently an active area of research  \cite{Erlangga}. 
Summing up, accurately solving Helmholtz equations for wave propagation---especially the scattering of high-frequency waves---may overwhelm the capabilities of the state-of-the-art in distributed computing and domain decomposition for BVPs.

\paragraph{The case for probabilistic domain decomposition (PDD).} PDD is an innovative domain decomposition algorithm for large elliptic BVPs \cite{Acebron_PDD}. The novelty consists in computing the pointwise solution of the BVP on the nodes along subdomain interfaces right away by means of the Feynman-Kac formula \cite{Freidlin85,Milstein&Tretyakov04}. Once the solution along the subdomain interfaces is available, each of the subdomain-restricted problems can be solved independently from one another. This is done embarrassingly in parallel, and involving orders of magnitude fewer DoFs per solve---those in each subdomain only.   
Since the Feynman-Kac representation of the BVP leads naturally to a Monte Carlo method, the contributions from the individual trajectories can be computed in an embarrassingly parallel way, too. The point of PDD is to enhance scalability of domain decomposition with many DoFs and subdomains.  

Superficially, it would seem as though wave propagation problems made an ideal test bed for PDD, since the numerical issues of the former match the theoretical strengths of the latter. Nonetheless, two serious obstacles stand in the way.     

PDD relies on the probabilistic representation of the BVP at hand \cite{Freidlin85}. For example, for the BVP
\begin{align}
\nonumber
\Delta u + c({\bf x})u= 0\textrm{ if ${\bf x}\in\Omega$},\qquad u({\bf x})=0 \textrm{ if ${\bf x}\in\partial\Omega$} 
\end{align}
with $\Omega\subset{\mathbb R}^D$, the Feynman-Kac representation of the solution $u$ at ${\bf x}_0\in\Omega$ is 
\begin{align}
	\label{eq:FK_ex}
u({\bf x}_0)={\mathbb E}\left[\exp\left(\int_0^{\tau^{\Omega}({\bf x}_0)}c({\bf x}_0+\sqrt{2}{\bf W}_s)\,\dd s\right)\right],
\end{align}
as long as the expectation is finite. 
Above, ${\bf W}_t$ is a $D-$dimensional standard Wiener process, and $\tau^{\Omega}({\bf x}_0)$ is the first exit time of ${\bf x}_0+\sqrt{2}({\bf W}_t)_{t\geq 0}$ from $\Omega$. 
If $c({\bf x})\leq 0$ in $\Omega$ and ${\mathbb E}[\tau^{\Omega}({\bf x}_0)]<\infty$, then \eqref{eq:FK_ex} can be shown to hold --- in fact, the second assumption is redundant if $c({\bf x})\leq -c_0^2<0$ for some $c_0$. 
On the other hand, the distribution in \eqref{eq:FK_ex} may not have finite first or second moments if $c({\bf x})$ is allowed to be positive somewhere in $\Omega$. Clearly, if $c({\bf x})$ is strictly positive---as in the Helmholtz equation---the distribution is unbounded and \eqref{eq:FK_ex} naturally breaks down. 

In addition to the Helmholtz equation having the ``wrong'' reaction coefficient sign, the Feynman-Kac formula does not apply with complex-valued BVPs such as \eqref{eq:H0}  (see, however, \cite{Funaki1979}\cite{Schlottmann1999}\cite{Budaev2001}.)

Seeking to overcome the unsuitability of \eqref{eq:H0} for the Feynman-Kac formula, we note that the Helmholtz equation can be written as
\begin{align}
\label{eq:naive_iter}		
	\Delta u({\bf x}) - \left(\alpha -k^2\right) u({\bf x})= f({\bf x})-\alpha u({\bf x}),
\end{align}
which would be \textit{formally} solvable by Feynman-Kac provided that $\alpha\geq k^2$. (The numerics are those of spatially bounded diffusions, see \cite{Milstein&Tretyakov04}\cite{eikonal}.) Of course, this is impossible since the right-hand side contains the solution itself, but it suggests a fixed-point approach to solving \eqref{eq:H0}. Therefore, we envision approximations of the form 
$u=\sum u_i$, where each iterate solves a real Poisson equation $\Delta u_i = F(u_{i-1})$ and is thus amenable to the Feynman-Kac formula---so that the iterate BVP for $u_i$ can be solved by PDD in a highly scalable way. To construct such a nested iteration, the solution to \eqref{eq:H0} must be separated into its real and imaginary parts, and the respective iterates interleaved. It turns out that---in order to enforce convergence---the iteration depends on whether both parts of the solution are connected by the PDE or by the BCs. The first situation occurs with a damping term in \eqref{eq:H0}, while the latter is brought about by the BCs on the scatterer's surface and on the computational boundary. The analysis of said iterations---by combining stochastic and algebraic arguments---is the main contribution of this paper. 

\paragraph{Overview of the paper.} 

In Section 2, we specify and analyse the iterations described above, proving their convergence (within a range of $k$). First, we briefly review the necessary Feynman-Kac formulae for reference. We consider two Helmholtz BVPs, posed inside bounded domains with and without a hole and with BCs of Dirichlet and Robin type. The underlying physical situations of the BVPs addressed in Section 2 are wave propagation inside a cavity and the scattering of a plane wave.  
In either case, we design and study the convergence of a different iteration of Poisson BVPs. 
We provide a rigorous sufficient condition for convergence and a semi-heuristic necessary condition. Our experiments are restricted to the annular domain with mixed BCs (scattering), which is more suitable for PDD as its discretisation takes many DoFs and subdomains.

Section 3 assesses the theoretical findings of Section 2, and is divided into two parts. First, we focus on the feasibility results proved in Section 2, and explain how a scattering problem would be tackled with PDD and its potential of overcoming some of the numerical difficulties encountered by state-of-the-art methods {\em at low frequency}. Then, we explain how the same theoretical results preclude PDD from solving wave scattering problems at high frequencies. Consequently, our approach is currently limited to tamer problems at low frequency, for which there are better alternatives (than PDD). For that reason, this paper does not contain PDD simulations.

\section{The iterations}\label{S:Iters}
\subsection{Preliminaries: Feynman-Kac formula}\label{SS:FKSS:Validation}

Following \cite{2015fkformula}, one is able to represent the solution of elliptic BVPs with mixed boundary conditions via probabilistic calculus. Consider the open domain $\Omega \subset \bR^D $ with boundary $\partial \Omega$ such that $\partial \Omega$ is given by two (closed) subsets $ \partial \Omega_1  $ and $ \partial \Omega_2$. Over the closed domain ${\overline\Omega}=\Omega \cup \partial \Omega_1 \cup \partial \Omega_2$ we have the following BVP:
\begin{align}
    \label{eq: general pde}
    \begin{cases}
        \Delta u({\bf x})+ c({\bf x}) u({\bf x}) =f({\bf x}),
        &  {\bf x}\in  \Omega,
        \\
        u({\bf x})=\phi({\bf x}), &  {\bf x}\in \partial \Omega_1,
        \\
         \frac{\partial u}{\partial \bn}- \varphi({\bf x}) u({\bf x})=g({\bf x}), &   {\bf x}\in \partial \Omega_2, 
    \end{cases}
\end{align}
where  $c,\varphi: \bR^D \rightarrow \bR$ are nonnegative functions; $f,g,\phi: \bR^D \rightarrow \bR $;  
$\bn( {\bf x})\in {\mathbb R}^D$ is the unit outward  normal vector on $ \partial \Omega_2$;
$ \frac{\partial u}{\partial \bn}=   \bn({\bf x})^T \nabla u({\bf x})$ stands for the directional derivative; 
$\Delta$ is the Laplacian in ${\mathbb R}^D$;
the stochastic process $({\bf Y^{\bf y}}_t)_{t\geq 0}$ is the solution to the stochastic differential equation (SDE) 
\begin{align}
\label{eq:def_Y_t}
    \dd  {\bf Y}^{\bf y}_t&=\sqrt{2} \dd {\bf W}_t-\bn({\bf Y}^{\bf y}_t) \dd \xi_t, \quad
    {\bf Y}^{\bf y}_0=  {\bf y}\in \Omega, 
\end{align}
where ${\bf W}_t$ is a $D$-dimensional Brownian motion; and the local time process $\xi_t$ measures the time that $({\bf Y}^{\bf y})_{t\geq 0}$ spends on the boundary $ \partial \Omega_2$ up to time $t$, and is defined in \cite{levylocaltime} as 
\begin{align}
\nonumber
    \xi_t = \lim_{\varepsilon \rightarrow 0^+} \frac{\int_0^t \1_{\{ {\bf Y}^{\bf y}_s \in \Omega_{\varepsilon}\}} ds }{\varepsilon}
    \qquad \textrm{with}\qquad 
    \Omega_{\varepsilon}=\{ {\bf x}\in \Omega: d( {\bf x},\partial \Omega_2 )< \varepsilon\},~\varepsilon>0,
\end{align}
 where $\1_{\{\cdot\}}$ denotes the indicator function and $d(\cdot,\cdot)$ denotes the Euclidean distance.

Then, the Feynman-Kac formula (under certain assumptions) allows to represent the pointwise solution $u({\bf y})$ of \eqref{eq: general pde} in terms of an expectation over the It\^o diffusion process $({\bf Y}^{\bf y}_t)_{t\geq 0}$, namely 
\begin{align}
\label{eq: feynmann-kac for complete pde}
 u({\bf y})
 =
 &
 \bE\left[-\int_0^{\tau^{\bf y}} f\left({\bf Y}^{\bf y}_t\right) \Pi_t ~\dd t \right]
 +\bE\Big[
  \phi\left(Y_{ \tau^{\bf y}}\right)  \Pi_{\tau^{\bf y}}  \Big]
  +
 \bE\left[\int_0^{\tau^{\bf y}} g\left({\bf Y}^{\bf y}_t\right)  \Pi_t \dd\xi_t\right], 
 \\
\nonumber 
 \quad 
 \textrm{where}\quad
 \Pi_t= &\exp \left(\int_0^{t} c\left({\bf Y}_s\right)\dd s+ \varphi({\bf Y}_s^{\bf y})   \dd \xi_s \right),
\end{align}
with $\tau^{\bf y}=\inf \left\{t\geq 0: {\bf Y}^{\bf y}_t \in \partial \Omega_1\right\}$ being the first hitting time of ${\bf Y}^{\bf y}_t$ to the boundary $\partial \Omega_1$. Intuitively, the process starts at ${\bf x}={\bf y} \in \Omega$, is normally reflected on $\partial\Omega_2$, and terminates upon hitting $\partial\Omega_1$.

\subsection{Cavity domain}
\label{SS:Cavity}
We consider the following Helmholtz BVP on a simply  connected domain,
\begin{align}
    \label{eq: origin Helmholtz interior}
        \begin{cases}
        \Delta u+(ip+k^2) u =f^{\real}({\bf x})+if^{\imag}({\bf x}),
        &{\bf x}\in  \Omega,
        \\
        u=g^{\real}({\bf x})+ig^{\imag}({\bf x}), &{\bf x}\in\partial \Omega_1,
        \\
        \frac{\partial u}{\partial \bn} =0 , &{\bf x}\in \partial \Omega_2,
    \end{cases}
\end{align}
where $k,p$ are real, positive constants and $f^{\real},f^{\imag},g^{\real},g^{\imag}: \bR^D \rightarrow \bR$ are real scalar functions. We assume that the cavity walls may have a reflecting portion $\partial\Omega_2$, while $\partial\Omega_1\neq\emptyset$ for well-posedness.

Let us decompose $u=u^{\real}+i u^{\textrm{\imag}}$ into the real and imaginary components. From \eqref{eq: origin Helmholtz interior}, one can write a system of coupled PDEs for them: 
\begin{align}
    \label{eq:origin helmholtz int verion with real and im part}
    \begin{cases}
        \Delta u^{\real}+k^2 u^{\real} = f^{\real}({\bf x})+pu^{\imag}
        &, {\bf x}\in  \Omega,
        \\
        u^{\real}= g^{\real}({\bf x}) &, {\bf x}\in\partial \Omega_1,
        \\
        \frac{\partial u^{\real}}{\partial \bn} =0 &, {\bf x}\in \partial  \Omega_2 ,
    \end{cases}
    \quad 
    \begin{cases}
        \Delta u^{\imag}+k^2 u^{\imag} =f^{\imag}({\bf x})-p u^{\real}
        &, {\bf x}\in  \Omega,
        \\
        u^{\imag}= g^{\imag}({\bf x}) &, {\bf x}\in\partial \Omega_1,
        \\
        \frac{\partial u^{\imag}}{\partial \bn} =0 &,{\bf x}\in \partial  \Omega_2 .
    \end{cases}
\end{align}
Now, let $\alpha\geq k^2>0$ and consider the system of coupled PDEs (stemming from iterating \eqref{eq:origin helmholtz int verion with real and im part} in the context of the argument used in \eqref{eq:naive_iter})
\begin{align}
\label{eq: def of pde v2 int}
    \begin{cases}
        \Delta v_0-(\alpha-k^2) v_0  =f^{\real}({\bf x}),
        \\
        \Delta v_1 -(\alpha-k^2) v_1  =-\alpha v_0 + pw_0,
        \\
        \quad \vdots
        \\
        \Delta v_n -(\alpha-k^2) v_n  =-\alpha v_{n-1} + pw_{n-1},
        \\
        \quad \vdots
        \\
    \end{cases}
    ~
    \begin{cases}
        \Delta w_0-(\alpha-k^2) w_0  =f^{\imag}({\bf x}),
        \\
        \Delta w_1 -(\alpha-k^2) w_1  =-\alpha w_0-p v_0 ,
        \\
        \quad \vdots
        \\
        \Delta w_n -(\alpha-k^2) w_n  =-\alpha w_{n-1} -p v_{n-1}\\
        \quad \vdots
        \\
    \end{cases}
\end{align}
with boundary conditions for the $\{v_n\}_n$ and the $\{w_n\}_n$ given by
\begin{align}
\label{eq: bc1 of pde v2 int}
  \textrm{if }{\bf x}&\in  \partial \Omega_1,\quad 
    \begin{cases}
          v_0  =g^{\real}({\bf x}),
        \\
         v_n =0, ~n\geq 1,
    \end{cases}
    \quad \textrm{and} \quad 
    \begin{cases}
          w_0  =g^{\imag}({\bf x}),
        \\
         w_n =0, ~n\geq 1,
    \end{cases}
\end{align}
and 
\begin{align}
\label{eq: bc2 of pde v2 int}
   \textrm{if } {\bf x}&\in  \partial \Omega_2,
    \quad 
     \frac{\partial v_n}{\partial \bn}=  \frac{\partial w_n}{\partial \bn} = 0, \quad n\geq 0.
\end{align} 
 
The next result establishes the link between the nested iteration system \eqref{eq: def of pde v2 int}-\eqref{eq: bc2 of pde v2 int} and the cavity problem \eqref{eq: origin Helmholtz interior}.

\begin{remark}
\label{rem:Same tau-y across n} 

Inspecting \eqref{eq: def of pde v2 int}, we see that across $n$, the left-hand side (LHS) of the Equation \eqref{eq: def of pde v2 int} for $v_n$ and $w_n$ do not change albeit its right-hand side (RHS) do.  This means that at the level of the stochastic representation in Equation \eqref{eq: general pde}-\eqref{eq: feynmann-kac for complete pde}, $\tau^{\bf y}$ is different (as it uses a different independent Brownian motion at each $n$) but identically distributed across $n$, and thus $\bE[\tau^{\bf y}]$ is the same for all $n$.    
\end{remark}

If $\alpha\geq k^2$ and 
$(\alpha+p)\bE[\tau^{\bf y}]<1$ for any ${\bf y}\in\Omega$ (which depends on the shape of $\Omega$, $\partial\Omega_1$, and $\partial\Omega_2$), the sum of iterates converges to the solution of the Helmholtz equation in the cavity.  

 \begin{theorem}
\label{lemma: second version conv int}
    Let $v_n,w_n$ be the solutions of the PDEs defined in \eqref{eq: def of pde v2 int} with boundary conditions \eqref{eq: bc1 of pde v2 int}-\eqref{eq: bc2 of pde v2 int} for $n\in \{0\}\cup{\mathbb N}$. 
Let $\|v_n\|_\infty:=\sup_{{\bf x}\in \Omega} |v_n({\bf x})|$ and $\|w_n\|_\infty:=\sup_{{\bf x}\in \Omega} |w_n({\bf x})|$.  

If $  (\alpha+p)\bE[\tau^{\bf y}]    <1$ for all ${\bf y}\in \Omega$, then  $\lim_{n\rightarrow\infty} \|v_n\|_\infty=0$ and $\lim_{n\rightarrow\infty} \|w_n\|_\infty=0$, with  $\sum_{n=0}^\infty \|v_n\|_\infty <\infty$ and $\sum_{n=0}^\infty \|w_n\|_\infty <\infty$. Moreover, $\sum_{n=0}^\infty v_n=u^{\real}$ and $\sum_{n=0}^\infty w_n=u^{\imag}$, where $u^{\real},~u^{\imag} $ are as in \eqref{eq:origin helmholtz int verion with real and im part}.
\end{theorem}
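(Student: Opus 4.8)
The plan is to feed each iterate into the Feynman--Kac formula of Section~\ref{SS:FKSS:Validation}, extract a geometric contraction from the hypothesis $(\alpha+p)\bE[\tau^{\bf y}]<1$, and then identify the summed series with $(u^{\real},u^{\imag})$ via a uniqueness argument. First I would specialise \eqref{eq: feynmann-kac for complete pde} to the iterate BVPs \eqref{eq: def of pde v2 int}--\eqref{eq: bc2 of pde v2 int}: here the reaction coefficient is $c({\bf x})=-(\alpha-k^2)\le 0$ (using $\alpha\ge k^2$) and the Robin coefficient is $\varphi\equiv 0$, so the weight collapses to $\Pi_t=\exp(-(\alpha-k^2)t)\le 1$. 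For $n\ge 1$ the boundary data vanish, so that
\[
v_n({\bf y})=\bE\Big[\int_0^{\tau^{\bf y}}\big(\alpha v_{n-1}-p w_{n-1}\big)({\bf Y}^{\bf y}_t)\,\Pi_t\,\dd t\Big],
\]
and analogously $w_n({\bf y})=\bE\big[\int_0^{\tau^{\bf y}}(\alpha w_{n-1}+p v_{n-1})\Pi_t\,\dd t\big]$; the base cases $v_0,w_0$ additionally carry the $f^{\real},g^{\real}$ (resp.\ $f^{\imag},g^{\imag}$) contributions and are bounded as soon as the data are.

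Next I would set $M_n:=\max(\|v_n\|_\infty,\|w_n\|_\infty)$. Using $\Pi_t\le 1$ together with $|\alpha v_{n-1}-pw_{n-1}|\le(\alpha+p)M_{n-1}$, the displayed identity yields the pointwise estimate $|v_n({\bf y})|\le(\alpha+p)M_{n-1}\,\bE[\tau^{\bf y}]$, and the same bound for $w_n$. Taking the supremum over ${\bf y}\in\Omega$ gives $M_n\le q\,M_{n-1}$ with $q:=(\alpha+p)\sup_{{\bf y}\in\Omega}\bE[\tau^{\bf y}]$. Since $\bE[\tau^{\bf y}]$ is the (continuous, bounded) solution of $\Delta h=-1$ in $\Omega$ with $h=0$ on $\partial\Omega_1$ and $\partial h/\partial\bn=0$ on $\partial\Omega_2$, it attains its maximum on the compact $\overline\Omega$, so the pointwise hypothesis upgrades to $q<1$. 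Hence $M_n\le q^nM_0\to 0$ and $\sum_n\|v_n\|_\infty,\ \sum_n\|w_n\|_\infty\le\sum_n M_n<\infty$, which is the first half of the claim.

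To identify $V:=\sum_{n\ge0}v_n$ and $W:=\sum_{n\ge0}w_n$ (both uniformly convergent by the Weierstrass $M$-test) with $u^{\real},u^{\imag}$, I would deliberately avoid commuting $\Delta$ with the infinite sum and instead sum the Feynman--Kac identities, interchanging the series with the expectation. This interchange is the step I expect to be the main obstacle, but it is licensed by dominated convergence, the dominating function being $(\alpha+p)\big(\sum_m M_m\big)\tau^{\bf y}$, which is integrable by the summability just proved. The outcome is that $V$ is exactly the Feynman--Kac solution of $\Delta V-(\alpha-k^2)V=f^{\real}-\alpha V+pW$, equivalently $\Delta V+k^2V=f^{\real}+pW$ with $V=g^{\real}$ on $\partial\Omega_1$ and $\partial V/\partial\bn=0$ on $\partial\Omega_2$; symmetrically $\Delta W+k^2W=f^{\imag}-pV$ with the matching BCs. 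Thus $(V,W)$ solves precisely the system \eqref{eq:origin helmholtz int verion with real and im part} satisfied by $(u^{\real},u^{\imag})$.

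Finally I would close the argument by uniqueness for that system. Recombining $U:=V+iW$ and $u:=u^{\real}+iu^{\imag}$, the difference $U-u$ solves the homogeneous complex BVP $\Delta(U-u)+(ip+k^2)(U-u)=0$ with homogeneous Dirichlet/Neumann data. Multiplying by the complex conjugate, integrating over $\Omega$, and integrating by parts (the boundary terms vanishing because on $\partial\Omega_1$ the trace is zero and on $\partial\Omega_2$ the normal derivative is zero) gives $\int_\Omega|\nabla(U-u)|^2=(ip+k^2)\int_\Omega|U-u|^2$; taking imaginary parts forces $p\int_\Omega|U-u|^2=0$, so $U\equiv u$ because $p>0$. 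Hence $V=u^{\real}$ and $W=u^{\imag}$, completing the proof.
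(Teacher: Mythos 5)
Your proposal is correct, and its first half (the Feynman--Kac representation of the iterates with weight $\Pi_t=e^{-(\alpha-k^2)t}\le 1$ and the resulting geometric contraction) is essentially the paper's argument; working with $M_n=\max(\|v_n\|_\infty,\|w_n\|_\infty)$ rather than with $\|v_n\|_\infty+\|w_n\|_\infty$ is an immaterial change of bookkeeping, and your explicit upgrade of the pointwise hypothesis to a uniform $q<1$ via continuity of ${\bf y}\mapsto\bE[\tau^{\bf y}]$ is a point the paper glosses over. Where you genuinely diverge is in identifying $\sum_n v_n$ and $\sum_n w_n$ with $u^{\real}$ and $u^{\imag}$. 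The paper writes the BVP satisfied by the partial sums $\sum_{n=0}^N v_n$, $\sum_{n=0}^N w_n$ and "lets $N\to\infty$", which tacitly commutes $\Delta$ with the infinite series --- a step that uniform convergence of the $v_n$ alone does not justify. You instead sum the Feynman--Kac integral identities, pass to the limit by dominated convergence (correctly dominated by $(\alpha+p)(\sum_m M_m)\tau^{\bf y}$), recognise the limit as the Feynman--Kac solution of the system \eqref{eq:origin helmholtz int verion with real and im part}, and then close with an energy/integration-by-parts uniqueness argument for the damped complex problem. Your route costs an extra uniqueness lemma (which crucially uses $p>0$, and which you execute correctly) but buys a cleaner limit passage and makes explicit the well-posedness that the paper leaves implicit; the paper's route is shorter but formally differentiates the series term by term. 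Both arguments share the same unstated regularity assumptions needed for the Feynman--Kac formula to apply to each iterate, so neither is strictly more rigorous on that front.
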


\begin{proof}
Under the assumptions, from the Feynman-Kac formula \eqref{eq: feynmann-kac for complete pde}, for all ${\bf y}\in\Omega$ and $n=0$, 
    \begin{align*}
        |v_0({\bf y})|&= \Big| 
        \bE \Big[  -\int_0^{\tau^{\bf y}}  f^{\real}({\bf Y}_s^{\bf y})  e^{-(\alpha-k^2)s} \dd s
        \Big]
        +
        \bE \big[  -e^{-(\alpha-k^2) \tau^{\bf y}}
        g^{\real}(Y_{\tau^{\bf y}}^{\bf y})   
        \big]  \Big| 
        \\
        &\leq \Big| 
        -\bE \big[\tau^{\bf y} \big]\inf_{y\in   \Omega } f^{\real}({\bf y}) +
        \sup_{y\in \partial \Omega_1} g^{\real}({\bf y}) \Big| <\infty,
        \\
        |w_0({\bf y})|&= \Big| 
        \bE \Big[  -\int_0^{\tau^{\bf y} }  f^{\imag}( {\bf Y }_s^{\bf y})  e^{-(\alpha-k^2)s} \dd s
        \Big]
        +
        \bE \big[  e^{-(\alpha-k^2) \tau^{\bf y}}
         g^{\imag}(Y_{\tau^{\bf y}}^{\bf y})
        \big]  \Big| 
        \\
        &\leq  \Big| 
        -\bE \big[\tau^{\bf y} \big]\inf_{y\in   \Omega } f^{\imag}({\bf y}) +
        \sup_{y\in \partial \Omega_1} g^{\imag}({\bf y}) \Big| 
        <\infty.
    \end{align*}
    For all $n\geq 1$, ${\bf y}\in \Omega$, we have 
    \begin{align*}
        v_n({\bf y})&=\bE \Big[  \int_0^{\tau^{\bf y} } \big(\alpha v_{n-1} ( {\bf Y }_s^{\bf y}) 
        - p w_{n-1} ( {\bf Y }_s^{\bf y})
        \big) e^{-(\alpha-k^2)s} \dd s
        \Big],
    \\
        w_n({\bf y})&=\bE \Big[  \int_0^{\tau^{\bf y}}  \big(\alpha w_{n-1} ( {\bf Y }_s^{\bf y}) 
        + p v_{n-1} ( {\bf Y }_s^{\bf y}) \big)
        e^{-(\alpha-k^2)s} \dd s
        \Big].
    \end{align*}
    Taking maximum norm on both sides,
     \begin{align*}
     \|v_n \|_\infty
        &\leq \|v_{n-1} \|_\infty\bE \Big[  \int_0^{\tau^{\bf y}} \alpha    e^{-(\alpha-k^2)s} \dd s
        \Big]
        +
        \|w_{n-1} \|_\infty\bE \Big[  \int_0^{\tau^{\bf y}} p    e^{-(\alpha-k^2)s} \dd s
        \Big]
        \\
         &\leq (\alpha\|v_{n-1} \|_\infty + p \|w_{n-1} \|_\infty )\bE \big[\tau^{\bf y}\big] 
        .
     \end{align*}
    Similarly, for $ \|w_n \|_\infty$, we have
     \begin{align*}
        \|w_n \|_\infty
         &\leq 
         (\alpha\|w_{n-1} \|_\infty + p \|v_{n-1} \|_\infty )\bE \big[\tau^{\bf y}\big]
         \end{align*}
      and therefore,   \begin{align*}
         \big( \|v_n \|_\infty+\|w_n \|_\infty\big)
         &\leq 
         \big( \|v_{n-1} \|_\infty+\|w_{n-1} \|_\infty\big) 
         ~( \alpha + p) \bE \big[\tau^{\bf y}\big]
        .
     \end{align*}
Since $( \alpha + p) \bE \big[\tau^{\bf y}\big] <1$ for all ${\bf y} \in \Omega$,   we deduce that  $\lim_{n\rightarrow\infty}\|v_n\|_\infty=0$ and  
$\lim_{n\rightarrow\infty}\|w_n\|_\infty=0$.
Next, we show that the series of $v_n,w_n$ converge to the exact solution in \eqref{eq:origin helmholtz int verion with real and im part}. 
We first show that the sum of the supremum norms is finite (employing the formula for the sum of a geometric series),
\begin{align*}
    \sum_{n=0}^{\infty}  \big( \|v_n \|_\infty+\|w_n \|_\infty\big)
    &
    \leq
    \big( \|v_0 \|_\infty+\|w_0 \|_\infty\big)  \sum_{n=0}^{\infty}
    \big(( \alpha + p)  \bE \big[\tau^{\bf y} \big]   \big)^n
    \\
    &
    \leq
   \big( \|v_0 \|_\infty+\|w_0 \|_\infty\big)  \frac{1}{1-  ( \alpha + p) \bE \big[\tau^{\bf y} \big]     }<\infty,
\end{align*}
Next, we consider $\sum_{n=0}^N v_n$ and $\sum_{n=0}^N w_n$ with $N\geq 1$. From \eqref{eq: def of pde v2 int}-\eqref{eq: bc2 of pde v2 int}, we have
\begin{align}
    \nonumber
    &\begin{cases}
        \Delta \sum_{n=0}^N v_n-(\alpha-k^2) \sum_{n=0}^N v_n =f^{\real}({\bf x})-\alpha \sum_{n=0}^{N-1} v_n+ p \sum_{n=0}^{N-1} w_n,
        &{\bf x}\in  \Omega,
        \\
        \sum_{n=0}^N v_n=g^{\real}({\bf x}), &{\bf x}\in\partial  \Omega_1,
        \\
        {\partial \sum_{n=0}^N v_n}/{\partial \bn} =0, &{\bf x}\in \partial \Omega_2.
    \end{cases}
    \\
    \nonumber
    &\begin{cases}
        \Delta \sum_{n=0}^N w_n-(\alpha-k^2) \sum_{n=0}^N w_n =f^{\imag}({\bf x})-\alpha \sum_{n=0}^{N-1} w_n
        - p \sum_{n=0}^{N-1} v_n
        ,
        &{\bf x}\in  \Omega,
        \\
        \sum_{n=0}^N w_n=g^{\imag}({\bf x}), &{\bf x}\in\partial  \Omega_1,
        \\
         {\partial \sum_{n=0}^N w_n}/{\partial \bn} =0, &{\bf x}\in \partial \Omega_2.
    \end{cases}
\end{align}
Letting $N\rightarrow \infty$ and rearranging terms, we have

\begin{align*}
    &\begin{cases}
        \Delta \sum_{n=0}^\infty v_n+k^2 \sum_{n=0}^\infty v_n
        - p \sum_{n=0}^\infty w_n
        = f^{\real}({\bf x})+\lim_{N\rightarrow\infty} (v_N-w_N)  
        =f^{\real}({\bf x}),
        &{\bf x}\in  \Omega,
        \\
        \sum_{n=0}^\infty v_n=g^{\real}({\bf x}), &{\bf x}\in\partial  \Omega_1,
        \\
        {\partial \sum_{n=0}^N v_n}/{\partial \bn} =0, &{\bf x}\in \partial \Omega_2,
    \end{cases}
    \\
    &\begin{cases}
        \Delta \sum_{n=0}^\infty w_n+k^2 \sum_{n=0}^\infty w_n 
        + p \sum_{n=0}^\infty v_n
        = f^{\imag}({\bf x})+\lim_{N\rightarrow\infty} (v_N+w_N)  
        =f^{\imag}({\bf x}),
        &{\bf x}\in  \Omega,
        \\
        \sum_{n=0}^\infty w_n=g^{\imag}({\bf x}), &{\bf x}\in\partial  \Omega_1,
        \\
         {\partial \sum_{n=0}^N w_n}/{\partial \bn} =0, &{\bf x}\in \partial \Omega_2,
    \end{cases}
\end{align*}
where we used the fact that $\lim_{N\rightarrow\infty}\|v_N\|_\infty=0$ and  
$\lim_{N\rightarrow\infty}\|w_N\|_\infty=0$ and the series convergence of $v_n$ and $w_n$.  
Thus, we conclude that $\sum_{n=0}^\infty v_n $ and $\sum_{n=0}^\infty w_n  $ recover the PDE system for $u^{\real}$ and $u^{\imag}$ in \eqref{eq:origin helmholtz int verion with real and im part}. 

\end{proof}

\begin{remark}
    In general, the problem described in  \eqref{eq:origin helmholtz int verion with real and im part} is ill-posed for some choices of $f^{\real}({\bf x}),f^{\imag}({\bf x})$ and $k$---see \cite{illposedproblembook}.  The same situation carries over to the sum of iterations \eqref{eq: def of pde v2 int}.  
\end{remark}

\begin{remark}
The sum of iterations $\sum_{n=0}^{\infty}v_n+i\sum_{n=0}^{\infty}w_n$ may fail to converge to the solution of \eqref{eq:origin helmholtz int verion with real and im part} (assuming that the latter exists) if $k> k_*^C$, given by  
\begin{align}
    \label{eq:k_*^C}
k_*^C=  \sqrt{\frac{1}{\max_{{\bf y}\in\Omega}\bE[\tau^{\bf y}]}-p}.
\end{align}
This is because there is no $\alpha>(k_*^C)^2$ such that the assumptions of Theorem \ref{lemma: second version conv int} can hold.
\end{remark}

\begin{remark}
It can be seen by direct application of the Feynman-Kac formula that $\bE[\tau^{\bf x}]=E({\bf x})$, where $E({\bf x})$ solves the BVP
\begin{align}
    \label{eq: E[tau]}
    \begin{cases}
        \Delta E({\bf x}) =-1,
        &  {\bf x}\in  \Omega,
        \\
        E({\bf x})=0, &  {\bf x}\in \partial \Omega_1,
        \\
         \frac{\partial E}{\partial \bn}=0, &   {\bf x}\in \partial \Omega_2.
    \end{cases}
\end{align}
This is the so-called (and well-known) mean-hitting time problem \cite{Milstein&Tretyakov04}.
\end{remark}

\subsection{Annular domain}\label{SS:Annular}
Here, we consider the Helmholtz equation on a disk-shaped domain with a hole inside. 
The boundary condition is of  Dirichlet type (i.e. absorbing impinging Feynman-Kac diffusions) on the inner boundary, $\partial\Omega_1$, and of Robin type (i.e. reflecting diffusions) on the outer edge of the disk, $\partial\Omega_2$. 
The BVP is given by 
\begin{align}
    \label{eq: origin helmholtz}
        \begin{cases}
        \Delta u+k^2 u =0,
        &{\bf x}\in  \Omega,
        \\
        u=-e^{-i k{ x_1}}, &{\bf x}\in\partial \Omega_1,\quad {\bf x} = (x_1,x_2),\\
        \frac{\partial u}{\partial \bn}-ik u=0 , &{\bf x}\in \partial \Omega_2,
    \end{cases}
\end{align}

As before, we choose to rewrite \eqref{eq: origin helmholtz} as two coupled BVPs for the real and imaginary parts:
\begin{align}
    \label{eq:origin helmholtz with real and im part}
    \begin{cases}
        \Delta u^{\real}+k^2 u^{\real} =0
        &, {\bf x}\in  \Omega,
        \\
        u^{\real}=-\cos(k{ x_1}) &, {\bf x}\in\partial \Omega_1,\\
        \frac{\partial u^{\real}}{\partial \bn}+k u^{\imag}=0  &, {\bf x}\in \partial \Omega_2,
    \end{cases}
    \qquad\textrm{and}\qquad 
    \begin{cases}
        \Delta u^{\imag}+k^2 u^{\imag} =0
        &, {\bf x}\in  \Omega,
        \\
        u^{\imag}=\sin(k{ x_1}) &, {\bf x}\in\partial \Omega_1,\\
        \frac{\partial u^{\imag} }{\partial \bn}-k u^{\real}=0 &, {\bf x}\in \partial \Omega_2.
    \end{cases}
\end{align}
 
Setting again $\alpha\geq k^2>0$, we propose (as in Section \ref{SS:Cavity}) the following iteration of BVPs:
\begin{align}
\label{eq: def of pde v2}
    {\bf x}\in  \Omega,
    \begin{cases}
        \Delta v_0-(\alpha-k^2) v_0  =0,
        \\
        \Delta v_1 -(\alpha-k^2) v_1  =-\alpha v_0 ,
        \\
        \cdots
        \\
        \Delta v_n -(\alpha-k^2) v_n  =-\alpha v_{n-1} ,
    \end{cases}
    \quad
    \begin{cases}
        \Delta w_0-(\alpha-k^2) w_0  =0,
        \\
        \Delta w_1 -(\alpha-k^2) w_1  =-\alpha w_0 ,
        \\
        \cdots
        \\
        \Delta w_n -(\alpha-k^2) w_n  =-\alpha w_{n-1},
    \end{cases}
\end{align}
with boundary conditions
\begin{align}
\label{eq: bc1 of pde v2}
  {\bf x}\in  \partial \Omega_1,
    \begin{cases}
          v_0  =-\cos (k{ x_1}),
        \\
         v_n =0, ~n\geq 1,
    \end{cases}
    \qquad \quad 
    \begin{cases}
          w_0  =\sin (k{ x_1}),
        \\
         w_n =0, ~n\geq 1,
    \end{cases}
\end{align}
and 
\begin{align}
\label{eq: bc2 of pde v2}
  {\bf x}\in   \partial \Omega_2,
    \begin{cases}
         \frac{\partial v_0}{\partial \bn}=0,
        \\
        \frac{\partial v_n}{\partial \bn}=-k w_{n-1},  ~n\geq 1,
    \end{cases}
    \qquad \quad 
    \begin{cases}
         \frac{\partial w_0}{\partial \bn}=0,
        \\
        \frac{\partial w_n}{\partial \bn}=k v_{n-1},  ~n\geq 1.
    \end{cases}
\end{align}

The next result establishes the convergence of the sum of iterates to the solution of the original Helmholtz BVP. 
\begin{remark} 
Inspecting \eqref{eq: def of pde v2}, we see that across $n$, the  LHS  of the Equation \eqref{eq: def of pde v2} for $v_n$ and $w_n$ does not change albeit its  RHS  is. We are interested only in $\bE[\tau^{\bf y} ]$ and $ \bE[\xi^{\bf y} ]$, thus the quantities of interest is the same across $n$. This is the same argument as in Remark \ref{rem:Same tau-y across n} above.    
\end{remark}

\begin{theorem}
\label{lemma: second version conv}
    Let $\Omega,\partial\Omega_1,\partial\Omega_2$ and $v_n,w_n$ be defined in \eqref{eq: def of pde v2} with boundary conditions \eqref{eq: bc1 of pde v2} \eqref{eq: bc2 of pde v2} for $n\in \{0\}\cup{\mathbb N}$. 
    
    If $(\alpha\bE[\tau^{\bf y} ]+ k \bE[\xi^{\bf y} ])   <1$ for all ${\bf y}\in \Omega$, then  $\lim_{n\rightarrow\infty} \|v_n\|_\infty=0$ and $\lim_{n\rightarrow\infty} \|w_n\|_\infty=0$, with  $\sum_{n=0}^\infty \|v_n\|_\infty <\infty$ and $\sum_{n=0}^\infty \|w_n\|_\infty <\infty$. Moreover, $\sum_{n=0}^\infty v_n=u^{\real}$ and $\sum_{n=0}^\infty w_n=u^{\imag} $, where $u^{\real},~u^{\imag} $ are defined in \eqref{eq:origin helmholtz with real and im part}.

\end{theorem}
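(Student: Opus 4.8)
The plan is to mirror the proof of Theorem \ref{lemma: second version conv int} for the cavity, the essential new feature being that here the real and imaginary iterates are coupled through the Robin boundary datum on $\partial\Omega_2$ rather than through a reaction term. Consequently, the coupling surfaces in the Feynman-Kac formula \eqref{eq: feynmann-kac for complete pde} through the local-time integral against $\dd\xi_t$ instead of the Lebesgue integral against $\dd t$, and this is precisely where $\bE[\xi^{\bf y}]$ enters the convergence condition, in place of the second $\bE[\tau^{\bf y}]$ factor of the cavity case.

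First I would match each iterate BVP in \eqref{eq: def of pde v2}--\eqref{eq: bc2 of pde v2} to the template \eqref{eq: general pde}: the reaction coefficient is $c\equiv-(\alpha-k^2)\le 0$ and the Robin coefficient is $\varphi\equiv 0$, so the weight collapses to $\Pi_t=e^{-(\alpha-k^2)t}\le 1$ (using $\alpha\ge k^2$). For the base case, $v_0$ and $w_0$ carry only Dirichlet data of modulus at most one and no source, so Feynman-Kac gives $|v_0({\bf y})|\le\bE[e^{-(\alpha-k^2)\tau^{\bf y}}]\le 1$ and likewise for $w_0$, hence $M_0<\infty$. For $n\ge1$ the Dirichlet data vanish, the source is $-\alpha v_{n-1}$ (resp. $-\alpha w_{n-1}$), and the Robin data are $-kw_{n-1}$ (resp. $+kv_{n-1}$); the representation then reads
\[
v_n({\bf y})=\alpha\,\bE\!\Big[\int_0^{\tau^{\bf y}}\! v_{n-1}({\bf Y}^{\bf y}_t)\,e^{-(\alpha-k^2)t}\,\dd t\Big]-k\,\bE\!\Big[\int_0^{\tau^{\bf y}}\! w_{n-1}({\bf Y}^{\bf y}_t)\,e^{-(\alpha-k^2)t}\,\dd\xi_t\Big],
\]
and symmetrically for $w_n$ with a $+k$ sign. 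Bounding the integrands by the sup norms and using $e^{-(\alpha-k^2)t}\le1$ together with $\int_0^{\tau^{\bf y}}\dd t=\tau^{\bf y}$ and $\int_0^{\tau^{\bf y}}\dd\xi_t=\xi^{\bf y}$, I obtain $|v_n({\bf y})|\le\alpha\|v_{n-1}\|_\infty\bE[\tau^{\bf y}]+k\|w_{n-1}\|_\infty\bE[\xi^{\bf y}]$ and the mirror bound for $w_n$. Writing $M_n:=\max(\|v_n\|_\infty,\|w_n\|_\infty)$ and majorising both cross-term combinations pointwise by $M_{n-1}(\alpha\bE[\tau^{\bf y}]+k\bE[\xi^{\bf y}])$ --- crucially keeping the two expectations at the \emph{same} ${\bf y}$ --- the identity $\max(\|v_n\|_\infty,\|w_n\|_\infty)=\sup_{\bf y}\max(|v_n({\bf y})|,|w_n({\bf y})|)$ lets me take the supremum only at the end, yielding $M_n\le\rho\,M_{n-1}$ with $\rho:=\sup_{\bf y}(\alpha\bE[\tau^{\bf y}]+k\bE[\xi^{\bf y}])<1$. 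This geometric contraction gives $\|v_n\|_\infty,\|w_n\|_\infty\to0$ and summability via $\sum_n M_n\le M_0/(1-\rho)$.

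It then remains to identify the limits with $u^{\real},u^{\imag}$. I would sum the PDEs in \eqref{eq: def of pde v2} over $n=0,\dots,N$; the reaction terms telescope so that the partial sum $V_N:=\sum_{n=0}^N v_n$ solves $\Delta V_N+k^2V_N=\alpha v_N$, with Dirichlet datum $-\cos(kx_1)$ on $\partial\Omega_1$ and Robin datum $\partial V_N/\partial\bn=-k\sum_{n=0}^{N-1}w_n$ on $\partial\Omega_2$; the analogous identity holds for $W_N:=\sum_{n=0}^N w_n$ with Robin datum $+k\sum_{n=0}^{N-1}v_n$. Letting $N\to\infty$ and using $\|v_N\|_\infty,\|w_N\|_\infty\to0$ kills the residual $\alpha v_N$, so the limits solve precisely \eqref{eq:origin helmholtz with real and im part}; well-posedness of that BVP then forces $\sum_n v_n=u^{\real}$ and $\sum_n w_n=u^{\imag}$.

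The step I expect to be the main obstacle is the rigorous treatment of the local-time contribution: establishing that $\bE[\xi^{\bf y}]<\infty$ and that the integral against $\dd\xi_t$ is legitimately dominated by $\|w_{n-1}\|_\infty\,\bE[\xi^{\bf y}]$, which is the genuinely new estimate absent from the cavity argument. A secondary technical point is the passage $N\to\infty$ inside $\Delta$ and $\partial/\partial\bn$: this term-by-term differentiation should be justified from the summability of $\sum_n\|v_n\|_\infty$ together with elliptic regularity, rather than from sup-norm convergence alone. Finally, reading the hypothesis ``$\alpha\bE[\tau^{\bf y}]+k\bE[\xi^{\bf y}]<1$ for all ${\bf y}$'' as $\rho<1$ is immediate on the bounded annulus, where $\bE[\tau^{\bf y}]$ and $\bE[\xi^{\bf y}]$ are continuous and the supremum is attained.
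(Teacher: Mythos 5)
Your proposal is correct and follows essentially the same route as the paper: Feynman--Kac representation of each iterate with the coupling entering through the local-time integral, a contraction estimate of the form $\alpha\|v_{n-1}\|_\infty\bE[\tau^{\bf y}]+k\|w_{n-1}\|_\infty\bE[\xi^{\bf y}]$ (the paper contracts the sum $\|v_n\|_\infty+\|w_n\|_\infty$ where you use the max, an immaterial difference), geometric summability, and telescoping the partial-sum BVPs to recover \eqref{eq:origin helmholtz with real and im part}. Your explicit residual $\Delta V_N+k^2V_N=\alpha v_N$ and your flagged technical caveats (finiteness of $\bE[\xi^{\bf y}]$, term-by-term differentiation) are in fact slightly more careful than the paper's own treatment.
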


\begin{proof}
    
    
    Under the assumption, from the Feynman-Kac formula \eqref{eq: feynmann-kac for complete pde},   for all ${\bf y}\in\Omega$ and $n=0$, we have 
    \begin{align*}
        |v_0({\bf y})|&=\bE \big[  |-e^{-(\alpha-k^2) \tau^{\bf y}}
        \cos (k  {Y}_{\tau^{\bf y},1}^{\bf y} )|   
        \big] \leq 1,
        \quad 
        {\bf Y}_{\tau^{\bf y}}^{\bf y}
        = ( {Y}_{\tau^{\bf y},1}^{\bf y}, {Y}_{\tau^{\bf y},2}^{\bf y})
        \\
        |w_0({\bf y})|&=\bE \big[  |e^{-(\alpha-k^2) \tau^{\bf y}}
        \sin (k {Y}_{\tau^{\bf y},1}^{\bf y} )|
        \big] \leq 1.
    \end{align*}
    For all $n\geq 1$, ${\bf y}\in \Omega$, we have 
    \begin{align*}
        v_n({\bf y})&=\bE \Big[  \int_0^{\tau^{\bf y} } \alpha v_{n-1} ({\bf Y}_s^{\bf y})  e^{-(\alpha-k^2)s} \dd s
        \Big]
        +
        \bE \Big[  
        \int_0^{\tau^{\bf y} }-k w_{n-1} ({\bf Y}_s^{\bf y})  e^{-(\alpha-k^2)s} d \xi_s
        \Big],
    \\
        w_n({\bf y})&=\bE \Big[  \int_0^{\tau^{\bf y} } \alpha w_{n-1} ({\bf Y}_s^{\bf y})  e^{-(\alpha-k^2)s} \dd s
        \Big]
        +
        \bE \Big[  
        \int_0^{\tau^{\bf y} }k v_{n-1} ({\bf Y}_s^{\bf y})  e^{-(\alpha-k^2)s} d \xi_s
        \Big].
    \end{align*}
    Taking maximum norm $\|v_n\|_\infty:=\sup_{{\bf x}\in \Omega} |v_n({\bf x})|$ on both sides yields
     \begin{align*}
     \|v_n \|_\infty
        &\leq \|v_{n-1} \|_\infty\bE \Big[  \int_0^{\tau^{\bf y} } \alpha    e^{-(\alpha-k^2)s} \dd s
        \Big]
        + \| w_{n-1}\|_\infty 
        \bE \Big[  \int_0^{\tau^{\bf y} } k   e^{-(\alpha-k^2)s} d \xi_s
        \Big]
        \\
         &\leq \alpha\|v_{n-1} \|_\infty 
        \bE \big[\tau^{\bf y} \big]
        + k\| w_{n-1}\|_\infty 
        \bE[\xi^{\bf y} ].
     \end{align*}
    Similarly, for $ \|w_n \|_\infty$, we have
     \begin{align*}
        \|w_n \|_\infty
         &\leq \alpha\|w_{n-1} \|_\infty 
        \bE \big[\tau^{\bf y} \big]
        + k\| v_{n-1}\|_\infty 
        \bE[\xi^{\bf y} ].
     \end{align*}
    To summarize, upon iteration we have 
     \begin{align}
     \nonumber
     (\|v_n \|_\infty+     \|w_n \|_\infty)
         &\leq 
         (\|v_{n-1} \|_\infty+     \|w_{n-1} \|_\infty)
         (\alpha  \bE \big[\tau^{\bf y} \big]    + k\bE[\xi^{\bf y} ]) 
        \\
        \label{eq: proof supremum norm decay}
         &\leq 
         (\|v_{0} \|_\infty+     \|w_{0} \|_\infty)
         \big(\alpha  \bE \big[\tau^{\bf y} \big]    + k\bE[\xi^{\bf y} ]\big)^{n}.
    \end{align}
Since $(\alpha  \bE \big[\tau^{\bf y} \big]    + k\bE[\xi^{\bf y} ])<1$ for all ${\bf y} \in \Omega$,   we have that $\lim_{n\rightarrow\infty}\|v_n\|_\infty=0$ and  
$\lim_{n\rightarrow\infty}\|w_n\|_\infty=0$.
We shall show that the summations of $v_n,w_n$ converge to the exact solution components in \eqref{eq:origin helmholtz with real and im part}.
We first show the summation of the supremum norm is finite:
\begin{align*}
    \sum_{n=0}^{\infty} (\|v_n \|_\infty+     \|w_n \|_\infty)
    &
    \leq
    (\|v_{0} \|_\infty+     \|w_{0} \|_\infty) \sum_{n=0}^{\infty}
    \big(\alpha  \bE \big[\tau^{\bf y} \big]    + k\bE[\xi^{\bf y} ]\big)^n
    \\
    &
    \leq
    (\|v_{0} \|_\infty+     \|w_{0} \|_\infty) \frac{1}{1- (\alpha  \bE \big[\tau^{\bf y} \big]    + k\bE[\xi^{\bf y} ])}<\infty.
\end{align*}
Next, we consider $\sum_{n=0}^N v_n$ and $\sum_{n=0}^N w_n$ with $N\geq 1$. From \eqref{eq: def of pde v2},\eqref{eq: bc1 of pde v2} and \eqref{eq: bc2 of pde v2}, we have

\begin{align}
    \nonumber
    &\begin{cases}
        \Delta \sum_{n=0}^N v_n-(\alpha-k^2) \sum_{n=0}^N v_n =-\alpha \sum_{n=0}^{N-1} v_n,
        &{\bf x}\in  \Omega,
        \\
        \sum_{n=0}^N v_n=-\cos(k{ x_1}), &{\bf x}\in\partial \Omega_1,\\
         {\partial \sum_{n=0}^N v_n}/{\partial \bn} =-k  \sum_{n=0}^{N-1} w_n, &{\bf x}\in \partial \Omega_2,
    \end{cases}
    \\
    \nonumber
    &\begin{cases}
        \Delta \sum_{n=0}^N w_n-(\alpha-k^2) \sum_{n=0}^N w_n =-\alpha \sum_{n=0}^{N-1} w_n,
        &{\bf x}\in  \Omega,
        \\
        \sum_{n=0}^N w_n=\sin(k{ x_1}), &{\bf x}\in\partial \Omega_1,\\
         {\partial \sum_{n=0}^N w_n}/{\partial \bn} =-k  \sum_{n=0}^{N-1} v_n, &{\bf x}\in \partial \Omega_2.
    \end{cases}
\end{align}
Letting $N\rightarrow \infty$ and rearranging terms, we have

\begin{align}
    \nonumber
    &\begin{cases}
        \Delta \sum_{n=0}^\infty v_n+k^2 \sum_{n=0}^\infty v_n = \lim_{n\rightarrow\infty} v_n
        =0,
        &{\bf x}\in  \Omega,
        \\
        \sum_{n=0}^\infty v_n=-\cos(k{ x_1}), &{\bf x}\in\partial \Omega_1,\\
         {\partial \sum_{n=0}^\infty v_n}/{\partial \bn} +k  \sum_{n=0}^{\infty} w_n
        = \lim_{n\rightarrow\infty} w_n
        =0, &{\bf x}\in \partial \Omega_2,
    \end{cases}
    \\
    \nonumber
    &\begin{cases}
        \Delta \sum_{n=0}^\infty w_n+k^2 \sum_{n=0}^\infty w_n = \lim_{n\rightarrow\infty} w_n
        =0,
        &{\bf x}\in  \Omega,
        \\
        \sum_{n=0}^\infty w_n=\sin(kx_1), &{\bf x}\in\partial \Omega_1,\\
         {\partial \sum_{n=0}^\infty w_n}/{\partial \bn} +k  \sum_{n=0}^{\infty} v_n= \lim_{n\rightarrow\infty} v_n
        =0, &{\bf x}\in \partial \Omega_2,
    \end{cases}
\end{align}
where we used the fact that $\lim_{n\rightarrow\infty}\|v_n\|_\infty=0$ and  
$\lim_{n\rightarrow\infty}\|w_n\|_\infty=0$.
We conclude that $\sum_{n=0}^\infty v_n $ and $\sum_{n=0}^\infty w_n  $ recover the PDE system for $u^{\real}$ and $u^{\imag} $ in \eqref{eq:origin helmholtz with real and im part}.

\end{proof} 

\begin{remark}
Again, Theorem \ref{lemma: second version conv} provides a sufficiency condition for convergence that is no longer fulfilled if $k$ exceeds a geometry-dependent threshold $k_*^A$. For a point ${\bf x}\in\Omega$, the largest possible number $k={\hat k}({\bf x})$ verifying both $\alpha\geq {\hat k}^2({\bf x})$ and
$(\alpha\bE[\tau^{\bf x} ]+ k \bE[\xi^{\bf x} ])<1$ is given by $\alpha={\hat k}^2({\bf x})$ and
\begin{align*}
{\hat k}({\bf x})= \frac{-\bE[\xi^{\bf x} ]+\sqrt{\bE^2[\xi^{\bf x}] +4\bE[\tau^{\bf x} ]}}{2\bE[\tau^{\bf x} ]}.   
\end{align*}
Note that ${\hat k}({\bf x})>0$ since $\bE[\tau^{{\bf x}}]>0$ for ${\bf x}\in\Omega$. The threshold 
$k_*^A$ is therefore given by
\begin{align}
\label{eq:k_*^A} 
k_*^A=  \min\limits_{{\bf x}\in\Omega}
 \frac{-\bE[\xi^{\bf x} ]+\sqrt{\bE^2[\xi^{\bf x}] +4\bE[\tau^{\bf x} ]}}{2\bE[\tau^{\bf x} ]} 
.   
\end{align}
\end{remark}

\begin{remark}
By the Feynman-Kac formula, it can be seen that the expected local time on the reflecting boundary $\partial\Omega_2$ is $\bE[\xi^{\bf x}]=L({\bf x})$, where $L({\bf x})$ solves the BVP 
\begin{align}
    \label{eq: E[xi]}
    \begin{cases}
        \Delta L({\bf x}) =0,
        &  {\bf x}\in  \Omega,
        \\
        L({\bf x})=0, &  {\bf x}\in \partial \Omega_1,
        \\
         \frac{\partial L}{\partial \bn}=1, &   {\bf x}\in \partial \Omega_2.
    \end{cases}
\end{align}
\end{remark}

\subsection{Connection with linear algebra}\label{SS:Algebra}
\label{section: Connection to linear algebra}
We show next that the numerical discretisation of the previous iterations entails a necessary condition for their convergence (in addition to the derived sufficiency ones), whence an upper bound for $k$ arises. For the purpose of illustration, we consider the matrix generated by the finite difference (FD) discretisation of the annular problem in \eqref{eq: origin helmholtz} as the argument is more transparent with a strong (collocation) scheme such as FD than it is with a weak scheme such as finite elements (FE). 

The FD discretisation of the original BVP \eqref{eq: def of pde v2} leads to an approximate solution on the FD grid nodes ${\bf u}$, which solves the linear system 
\begin{align*}
	B(k) {\bf u}  = {\bf b}(k),
\end{align*}
where the matrix $B$ contains: the FD discretisations, the left-hand side of the Helmholtz PDE on the rows corresponding to the PDE nodes, as well as an FD discretisation of the Robin BC on the rows corresponding to the FD nodes on $\partial\Omega_2$. On the other hand, the FD nodes with Dirichlet BCs can be eliminated right away and transferred to the right-hand side vector ${\bf b}$. Additionally, ${\bf b}$ contains the values of the right-hand side functions of the PDE and the Robin BCs evaluated on the grid nodes.

We now tackle the iteration of \eqref{eq: def of pde v2}. Setting $\alpha= k^2$ for simplicity, the iteration takes the form
\begin{align}
&\Delta v_0= 0,\,\Delta v_1=-k^2 v_0,\ldots,\Delta v_n=-k^2 v_{n-1},&\textrm{ if }{\bf x}\in\Omega,\nonumber\\
&\Delta w_0= 0,\,\Delta w_1=-k^2 w_0,\ldots,\Delta w_n=-k^2 w_{n-1},&\textrm{ if }{\bf x}\in\Omega,\nonumber\\
&v_0=-\cos{kx_1},\,w_0=+\sin{kx_1},\,v_n=0=w_n \,(n\geq 1)&\textrm{ if }{\bf x}\in\partial\Omega_1,\nonumber\\
&\frac{\partial v_0}{\partial \bn}  = \frac{\partial w_0}{\partial \bn}=0,\,  \frac{\partial v_n}{\partial \bn} =-k w_{n-1},\, \frac{\partial w_n}{\partial \bn}=+k v_{n-1} \,(n\geq 1)&\textrm{ if }{\bf x}\in\partial\Omega_2.	\nonumber
\end{align}	
	
Arranging the FD grid nodes as ${\bf x}=({\bf x}',{\bf x}'')$, such that ${\bf x}''\in\partial\Omega_2$  are the nodes where the now Neumann BC is enforced and ${\bf x}'$ are the PDE nodes (recall that the known Dirichlet nodal values are eliminated from the system), the corresponding iterative nodal solutions for $n\geq 1$ are
\begin{align*}
	{ {\bf v}_n}= ({ {\bf v}_n'},\,{ {\bf v}_n''}) ,\qquad { {\bf w}_n}= ({ {\bf w}_n'},\,{ {\bf w}_n''}), 
\end{align*} 
which respectively solve the 
 linear systems	
\begin{align*}
	A 
\left(\begin{array}{c}
{ {\bf v}_n'} \\{}\\ { {\bf v}_n''}
\end{array}\right)
=
\left(\begin{array}{c}
	-k^2 { {\bf v}_{n-1}'} \\{}\\-k{ {\bf w}_{n-1}''} 
\end{array}\right)	
\end{align*}
and
\begin{align*}
	A
	\left(\begin{array}{c}
		{ {\bf w}_n'} \\ {} \\ { {\bf w}_n''}
	\end{array}\right)
	=
	\left(\begin{array}{c}
		-k^2 { {\bf w}_{n-1}'} \\{} \\+k { {\bf v}_{n-1}''}
	\end{array}\right)	
\end{align*}
for a real FD discretisation matrix $A$ discretising the Laplacian and the Neumann BC operators. Note that $A$ does not depend on $k$ or $n$; that the systems are coupled on the ${\bf x}''$ nodes only; and the sign change. We can rewrite the linear systems as 
\begin{align*}
	A
	\left(\begin{array}{c}
		{ {\bf v}_n'} \\ {} \\ { {\bf v}_n''}
	\end{array}\right)
	=
	\left(\begin{array}{ccc}
		-k^2 & & 0 \\ & \ddots & \\ 0 & & -k
	\end{array}\right)
	\left(\begin{array}{c}
		{ {\bf v}_{n-1}'} \\{} \\ { {\bf w}_{n-1}''}
	\end{array}\right)	
\end{align*}
and
\begin{align*}
	A 
	\left(\begin{array}{c}
		{ {\bf w}_n' }\\ {} \\{ {\bf w}_n''} 
	\end{array}\right)
	=
	\left(\begin{array}{ccc}
-k^2 & & 0\\ & \ddots & \\0 & & +k
	\end{array}\right)
	\left(\begin{array}{c}
		{ {\bf w}_{n-1}'} \\{} \\ { {\bf v}_{n-1}''}
	\end{array}\right).	
\end{align*}
The matrices on the right-hand side are diagonal. Let ${ {\bf u}_{n}}={ {\bf v}_n} + i{ {\bf w}_n}$. It holds: 
\begin{align*}
	A 
	\left(\begin{array}{c}
		{ {\bf u}_{n}'} \\ {} \\ { {\bf u}_{n}''}
	\end{array}\right)
	&=
	\left(\begin{array}{ccc}
		-k^2 & & 0 \\ & \ddots & \\ 0 & & +ik
	\end{array}\right)
	\left(\begin{array}{c}
		{ {\bf u}_{n-1}'} \\{} \\ { {\bf u}_{n-1}''} 
	\end{array}\right),	
\end{align*}
and therefore
\begin{align*}
     { {\bf u}_{n}} &= A^{-1} ~\left(\begin{array}{ccc}
		-k^2 & & 0 \\ & \ddots &  \\ 0 & & +ik
	\end{array}\right) { {\bf u}_{n-1}}
    =: G  { {\bf u}_{n-1}}.
\end{align*}
Let ${\bf u}_0$ be the nodal vector ${\bf v}_0+i{\bf w}_0$. By virtue of the matrix geometric series, 
\begin{align*}
	{\bf u }= \sum\limits_{n=0}^{\infty} G^n { {\bf u}_{0}} =  (I-G)^{-1} { {\bf u}_{0}} 
\end{align*}
which converges if and only if the spectral radius of $G$ is less than one. 

Figure \ref{fig: lines4} shows the spectral radius (left) and condition number (right) of the matrices $A^{-1}$ and $G$ as a function of $k$, in a numerical example solved with two FD grids of internodal separation $h=0.1$ and $h=0.05$. It can be seen that the spectral radius becomes larger than one for $k\gtrsim 1.2$ (with $h=0.1$) or $k\gtrsim 1$ (with $h=0.05$). (In both cases the maximum $k$ is over the theoretical value of $k_*^A$ for this problem, namely $k_*^A\approx 0.81$.) 

We point out in closing that $A$ is not positive definite owing to the reflecting BCs---but it would be with Dirichlet-only BCs.

\begin{figure}[h!bt]
    \centering
    \begin{subfigure}{.48\textwidth}
			\centering
 			\includegraphics[scale=0.48]{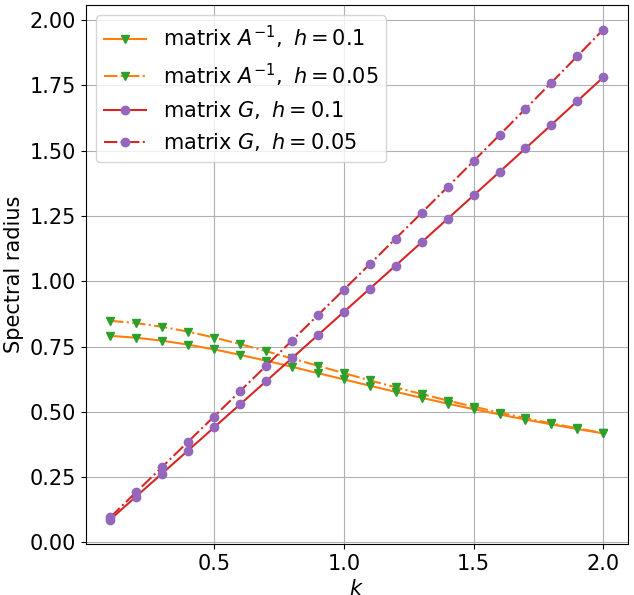}
		\end{subfigure}%
    \begin{subfigure}{.48\textwidth}
			\centering
 			\includegraphics[scale=0.48]{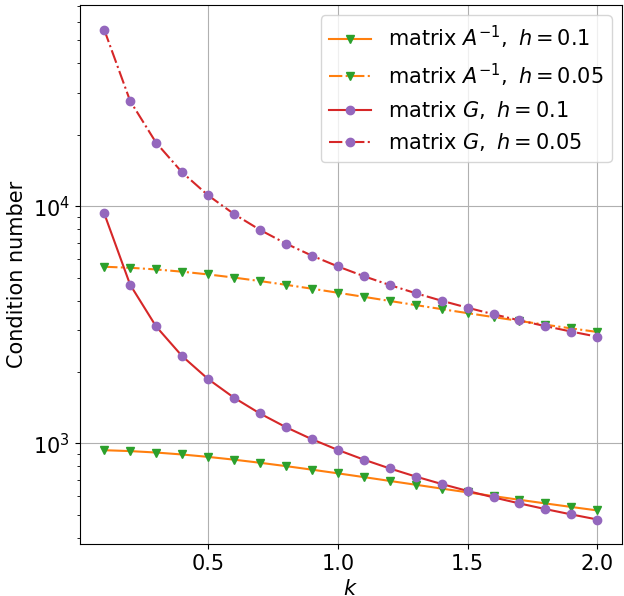}
		\end{subfigure}%
    \caption{ The spectral radius and condition number of the matrices generated by the finite difference method with $\alpha=k^2$ with gridsize $h\in\{0.1,~0.05\}$ applied on the problem \eqref{eq: origin helmholtz} on a square $[-0.5,0.5]\times[-0.5,0.5]$ with a square hole $[-0.15,0.15]\times[-0.15,0.15]$. 
     }
    \label{fig: lines4}
\end{figure}

\subsection{Numerical validation}\label{SS:Validation}

In this section, we consider three different annular shapes as examples of the problem in Section \ref{SS:Annular}. The shapes are graphically displayed in Figure \ref{fig:3-1} and described as follows
\begin{itemize}
    \item \textit{Shape 1} is the square $[-0.3,0.3]\times [-0.3,0.3]$ with a circular hole centred at $(0,0)$ and radius  $r=0.15$. 
    \item \textit{Shape 2} is a disk (centre at $(0.8,0.5)$ with radius  $r=0.45$) with a rhomboidal hole (centre at $(0,0)$ with length  $l=0.15$).
    \item \textit{Shape 3} is the rectangle $[-0.5,0.5]\times [-0.8,0.8]$ with a rectangular hole $[-0.05,0.05]\times [-0.4,0.4]$.
\end{itemize}
For all shapes, we computed an accurate numerical approximation to \eqref{eq: origin helmholtz} with FE (implemented by means of Matlab's {\tt PDETool}). It was used as a benchmark to investigate the new iterative scheme from \eqref{eq: def of pde v2},\eqref{eq: bc1 of pde v2},\eqref{eq: bc2 of pde v2}. The BVPs for those iterations as well as for $\bE[\tau^{\bf x}]$ \eqref{eq: E[tau]} and for $\bE[\xi^{\bf x}]$ \eqref{eq: E[xi]} were also calculated with FE on the three geometries. Convergence took place in all cases as predicted by Theorem \ref{lemma: second version conv}. 

\begin{figure}[h!bt]
 \centering
    \begin{subfigure}{.31\textwidth}
			\centering
 			\includegraphics[scale=0.42]{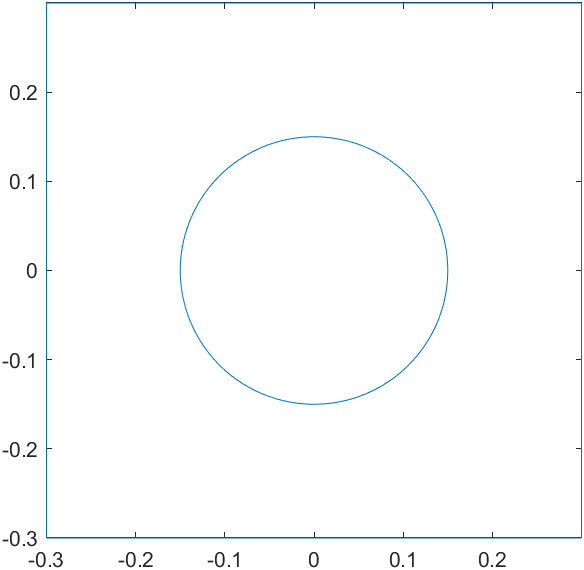}
			\caption{Shape 1: $k_*^A=1.93$}
		\end{subfigure}%
		\begin{subfigure}{.31\textwidth}
			\centering
 			\includegraphics[scale=0.42]{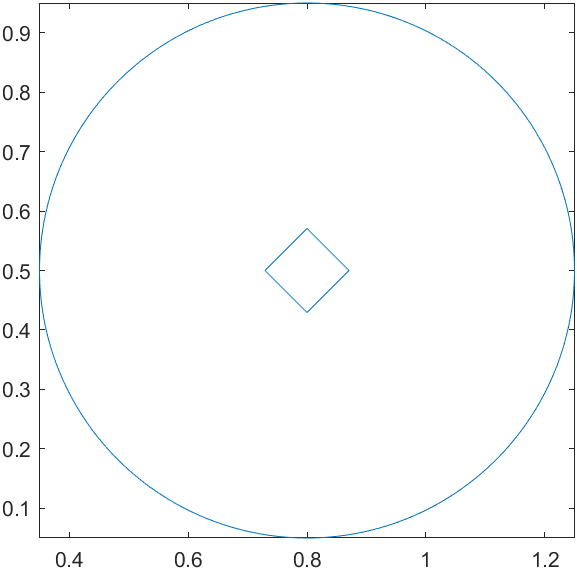}
			\caption{Shape 2: $k_*^A=0.95$}
		\end{subfigure}
            \begin{subfigure}{.31\textwidth}
			\centering
 			\includegraphics[scale=0.42]{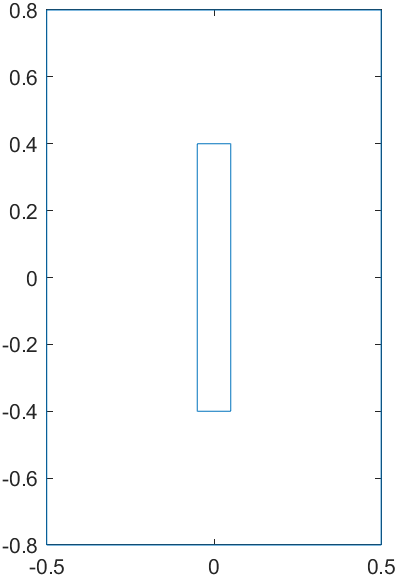}
			\caption{Shape 3: $k_*^A=1.24$}
		\end{subfigure}
    \caption{Geometry of three annular domains for the numerical experiments.}
    \label{fig:3shapes}
\end{figure}

An illustration is given in Figure \ref{fig:3-1}, where we show the real part of the iterates $v_n$ for $n\in\{0,5,10\}$ and the summation of the iterations $\sum_{n=0}^N v_n$ with $N=30$, as well as the difference (in absolute value) with respect to the true (numerical) solution.

\begin{figure}[h!bt]
 \centering
    \begin{subfigure}{.31\textwidth}
			\centering
 			\includegraphics[scale=0.35]{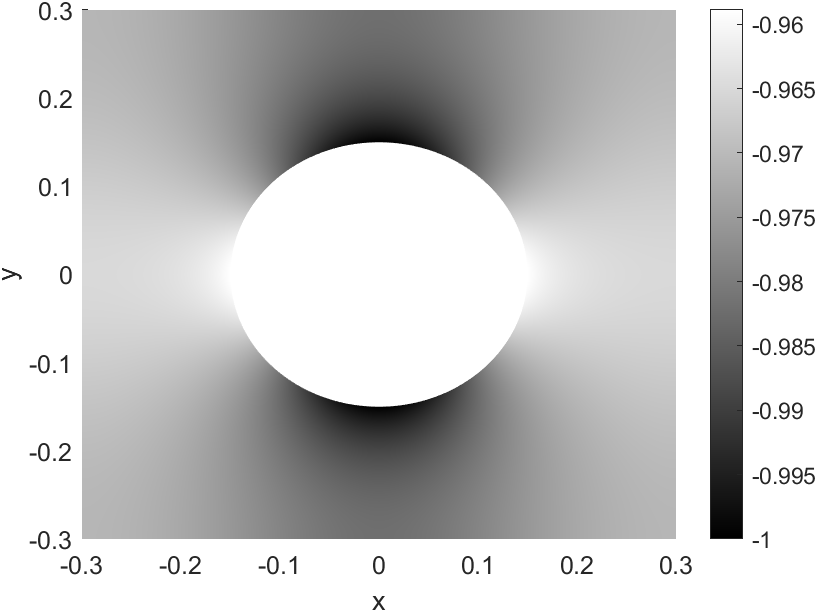}
			\caption{Iterate $v_0$}
		\end{subfigure}%
		\begin{subfigure}{.31\textwidth}
			\centering
 			\includegraphics[scale=0.35]{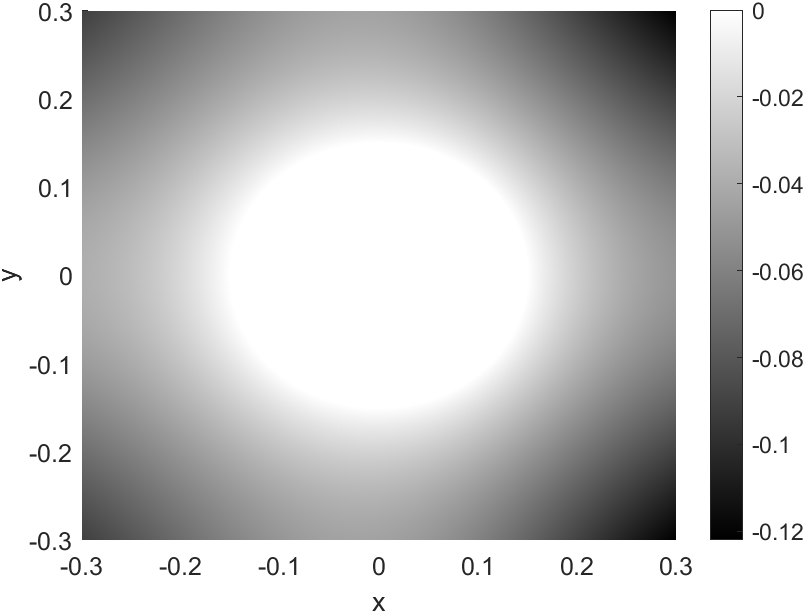}
			\caption{Iterate $v_5$}
		\end{subfigure}
            \begin{subfigure}{.31\textwidth}
			\centering
 			\includegraphics[scale=0.35]{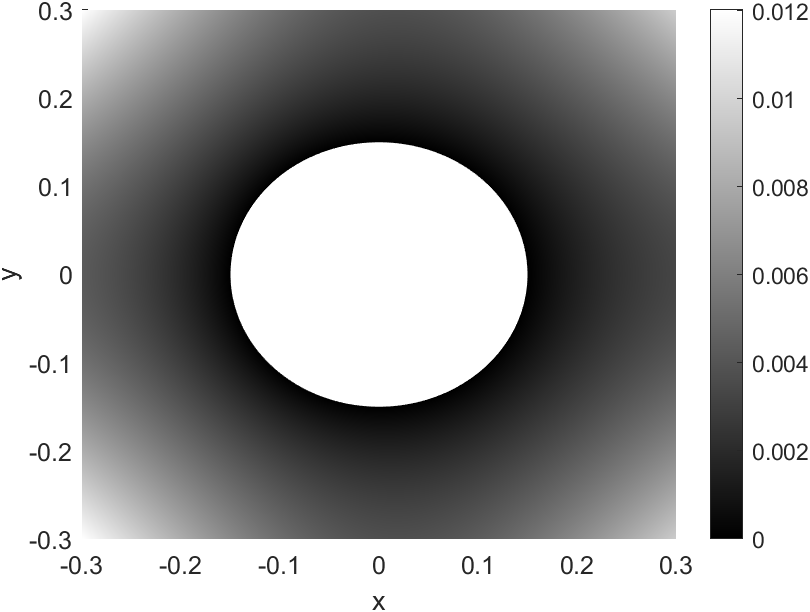}
			\caption{Iterate $v_{10}$}
		\end{subfigure}
  \\
            \begin{subfigure}{.31\textwidth}
			\centering
 			\includegraphics[scale=0.35]{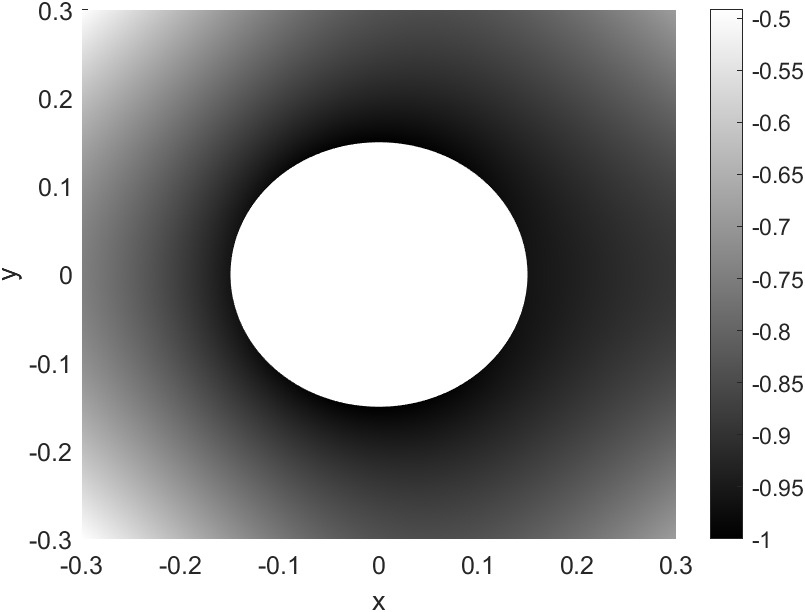}
			\caption{Sum of first $N=30$ iterates}
		\end{subfigure}
            \begin{subfigure}{.31\textwidth}
			\centering
 			\includegraphics[scale=0.35]{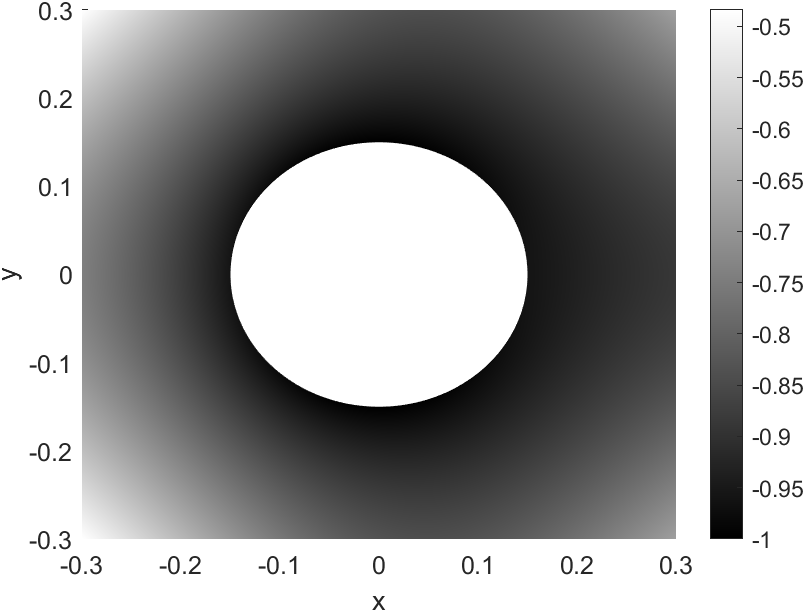}
			\caption{Solution for comparison}
		\end{subfigure}
            \begin{subfigure}{.31\textwidth}
			\centering
 			\includegraphics[scale=0.35]{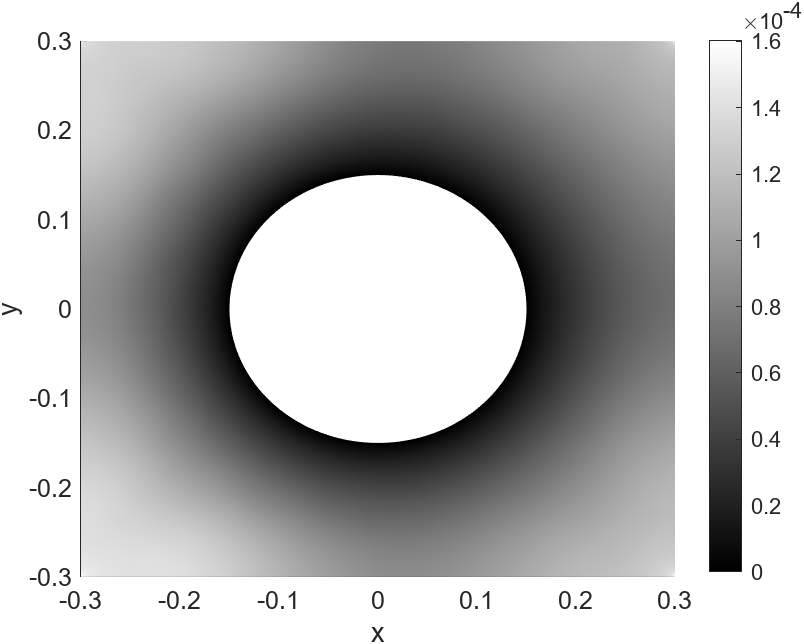}
			\caption{|Difference|  }
		\end{subfigure}
    \caption{Examples of iterates on shape 1, taking $k=1.93, \alpha=k^2$ (must converge according to Theorem \ref{lemma: second version conv}). The solution to \eqref{eq: origin helmholtz} is shown for comparison. (Real part of solution in all cases.)  
     }
    \label{fig:3-1}
\end{figure}

Table \ref{tab:my-table0} explores the sufficient condition for convergence in Theorem \ref{lemma: second version conv}, with different geometries and choices of $k$.   
We find that the iterate sums do converge (as expected) to the true solution whenever $k\leq k_*^A$, for the $k_*^A$ corresponding to the given shape. Moreover, while the iterative scheme also works for some $k$ past $k_*^A$, it eventually breaks down in all cases at some $k\gtrsim k_*^A$---
as expected from Section \ref{SS:Algebra}. Therefore, the upper bound for sufficient convergence ($k_*^A$) must be regarded as a lower bound to the upper bound for numerical convergence, as discussed in Section \ref{SS:Algebra}. The ``error'' column on Table \ref{tab:my-table0} lists the sup-norm of the difference between the real part of the exact solution and $\sum_{n=0}^{N}v_n$, for $N=30$. We can observe that as $k$ grows towards and past $k_*^A$, the convergence rate slows down.

\begin{table}[h!tb]
\centering
\begin{tabular}{|c|c|c|c|c|c|c|}
\hline
Shape              & $\sup_{{\bf y} \in \Omega}\bE[\tau^{\bf y}] $            &$\sup_{{\bf y} \in \Omega} \bE[\xi^{\bf y} ]$    &$k_*^A$                & Tested $k$    & Error    & Result \\ 
\hline
\multirow{4}{*}{1} & \multirow{4}{*}{0.03} & \multirow{4}{*}{0.46} & \multirow{4}{*}{1.93} & 1.50 & 9.13E-05 & converges \\ \cline{5-7} 
                   &                       &                       &                       & 1.93 & 1.72E-04 & converges \\ \cline{5-7} 
                   &                       &                       &                       & 1.94 & 1.76E-04 & converges \\ \cline{5-7} 
                   &                       &                       &                       & 2.90 & 1.5748   & diverges  \\ \hline
\multirow{4}{*}{2} & \multirow{4}{*}{0.15} & \multirow{4}{*}{0.91} & \multirow{4}{*}{0.95} & 0.70 & 2.40E-06 & converges \\ \cline{5-7} 
                   &                       &                       &                       & 0.95 & 9.20E-03 & converges \\ \cline{5-7} 
                   &                       &                       &                       & 0.96 & 1.19E-02 & converges \\ \cline{5-7} 
                   &                       &                       &                       & 1.12 & 0.12     & diverges  \\ \hline
\multirow{4}{*}{3} & \multirow{4}{*}{0.20}  & \multirow{4}{*}{1.24} & \multirow{4}{*}{0.72} & 0.50 & 1.77E-05 & converges \\ \cline{5-7} 
                   &                       &                       &                       & 0.72 & 7.39E-05 & converges \\ \cline{5-7} 
                   &                       &                       &                       & 0.73 & 9.07E-05 & converges \\ \cline{5-7} 
                   &                       &                       &                       & 1.07 & 4.98     & diverges  \\ 
                  \hline
\end{tabular}
\caption{$\alpha=k^2$, $N=30$, and the FE mesh size is given by {\tt PDETool}'s parameter $H_{max}=0.01$.}
\label{tab:my-table0}
\end{table}

Figure \ref{fig: lines2} shows that for $k=k_*^A$, the sup norm of the iterations decays monotonously as expected from \eqref{eq: proof supremum norm decay} from  Theorem \ref{lemma: second version conv}. 

\begin{figure}[h!bt]
    \centering
 			\includegraphics[scale=0.30]{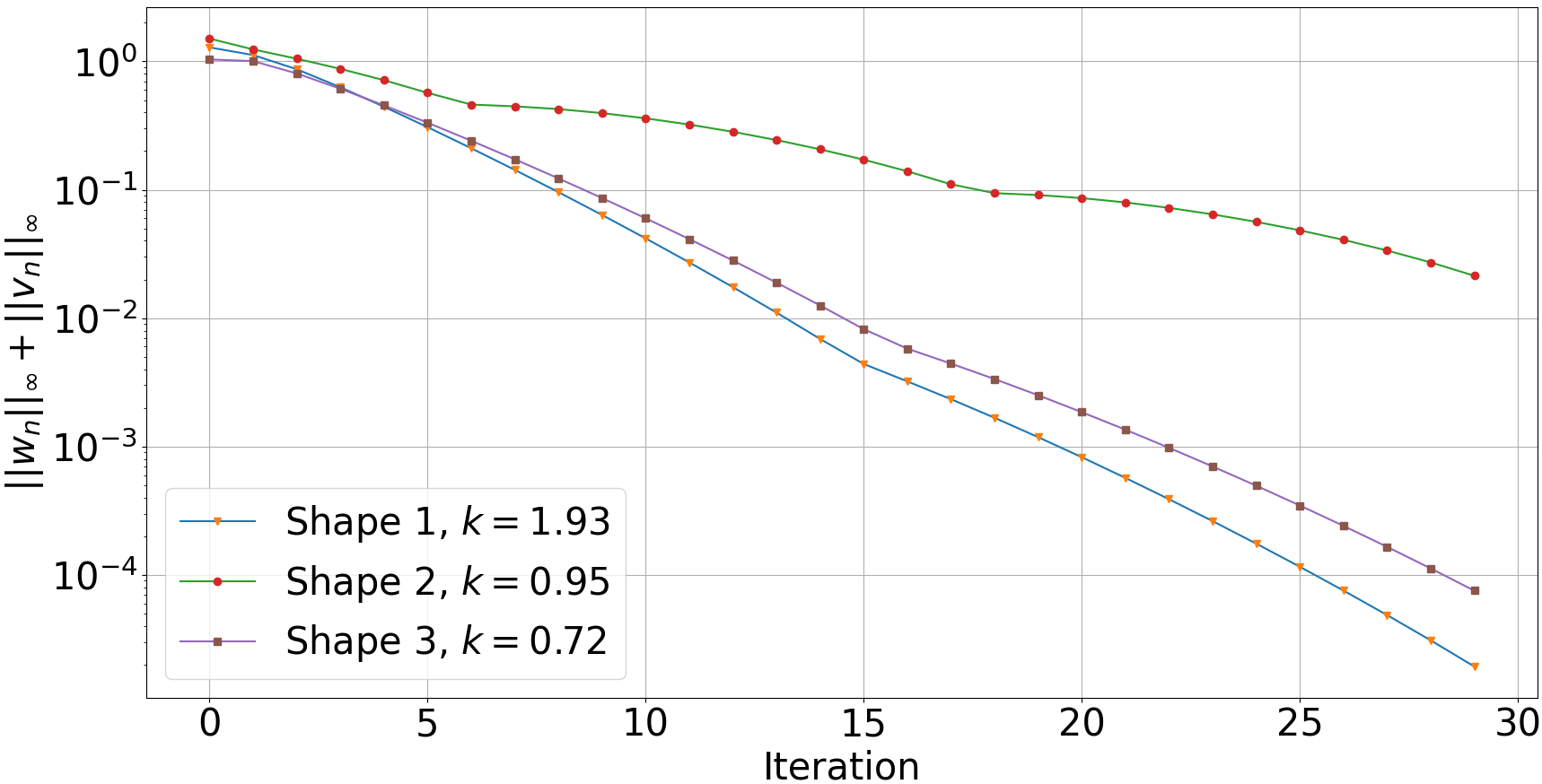}
    \caption{Convergence of $||v_n||_\infty+||w_n||_\infty$ using $k=k_*^A$ for the three shapes. Again $H_{max}=0.01$.
     }
    \label{fig: lines2}
\end{figure}

\section{Discussion on results' scope and limitations}\label{S:Discussion}
\paragraph{Positive results.} The analysis in the previous section proves that it is possible to cast the complex-valued Helmholtz BVPs related to wave propagation into an iteration of real-valued Poisson BVPs. In practice, the iteration would be truncated after $2m$ iterates---according, for instance, to a control on $\|v_m-v_{m-1}\|_{\infty}$. \textit{A priori}, it is not obvious how substituting one single linear problem (the original Helmholtz BVP) by a sequence of $2m$ linear problems is advantageous. The point of doing so---and the motivation which prompted this study---is solving the BVPs in the framework of Probabilistic Domain Decomposition (PDD).

\begin{figure}[hbt]
	\centering
	\includegraphics[scale=0.5]{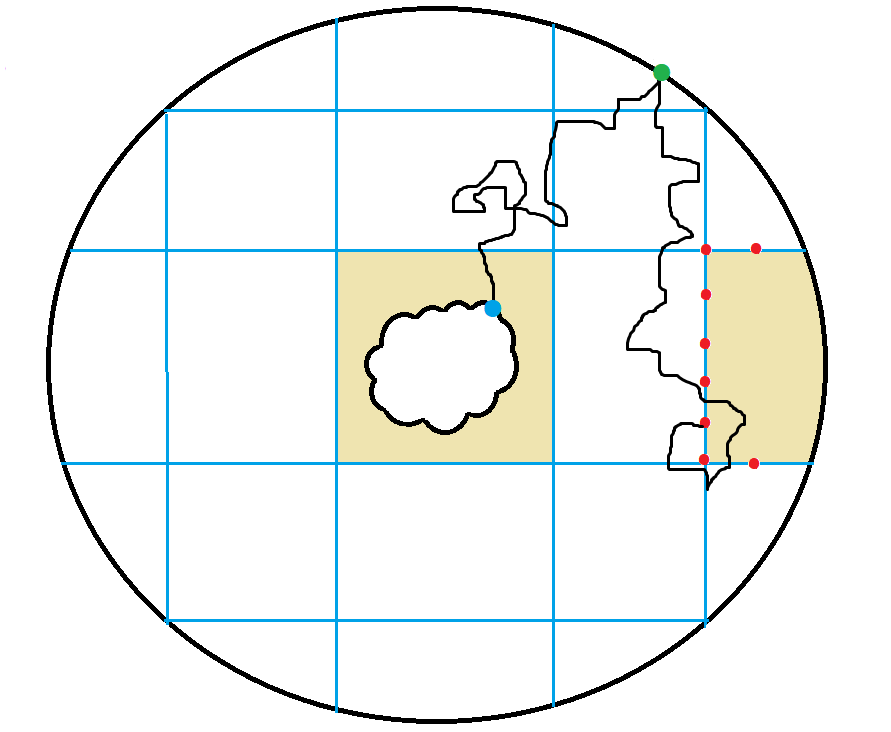}
	\caption{The annular computational domain $\Omega_c$ in the wave scattering problem, discretised into nonoverlapping subdomains. With PDD, the pointwise solution of the Poisson iterate is calculated by averaging the Feynman-Kac scores of an ensemble of stochastic diffusions starting at that point. One such trajectory is depicted. It starts at an interfacial node, diffuses throughout $\Omega_c$ according to the Brownian motion, may bounce off the computational boundary (where BCs of reflecting type hold), and eventually hits the Dirichlet BC on the scatterer's surface. After all interfacial values have been calculated, each subdomain (two of them are highlighted) has a well-posed Poisson BVP.}
	\label{F:PDD}
\end{figure}

Let us illustrate the notion of PDD with an example. Consider again the scattering of a plane wave. The annular computational domain $\Omega_c$ overlaid with a grid on nonoverlapping subdomains is sketched in Figure \ref{F:PDD}. Note that this grid and the underlying discretisation of each subdomain into nodes (by finite differences, meshless, or finite elements) is the same for PDD or for a (nonoverlapping) state of the art domain decomposition algorithm. 

In order to solve this problem with PDD, the $2m$ Poisson BVPs must be successively solved. Poisson BVPs are straightforward to tackle with PDD (see, e.g.~\cite{Acebron_PDD}). The procedure is:
\begin{itemize}
	\item First, the pointwise solution of the Poisson iterate BVP is computed on the {\em interfacial nodes only}, by averaging the Feynman-Kac scores of many independent diffusions inside $\Omega_c$.
	\item After that, the pointwise interfacial solutions are interpolated, and serve as a Dirichlet BC for each subdomain---which are solved independently by finite differences, elements, etc.
\end{itemize}
Note that the bulk of computations involved in both stages can be carried out concurrently, thus making near-optimal use of the parallel computer (i.e. with almost perfect strong scalability \cite{smith2017HybridPDEBranching}). This is, indeed, the purpose of PDD, and it is particularly relevant to wave scattering problems---where the scalability of state of the art domain decomposition may collapse due to the  huge number of DoFs and subdomains involved. In addition to enhanced scalability, the fact that most subdomain-restricted BVPs are Poisson equations with Dirichlet-only BCs allows for reliable, guaranteed convergent, preconditioners for definite positive algebraic systems\footnote{The discretisation matrix of a Poisson iterate restricted to a PDD subdomain with reflecting BCs (on the computational boundary) is not positive definite, as pointed out in Section \ref{SS:Algebra}.}, thus speeding up the convergence of Krylov iterations. (Incidentally, all this remains true if the sequence of Poisson BVPs \eqref{eq:naive_iter}---put forward in this paper---were to be tackled by standard domain decomposition methods, rather than PDD.) In fact, with PDD, the subdomains can conceivably be chosen small enough (in terms of DoFs) that the respective algebraic linear systems could be inverted by means of direct solvers, instead of iteratively---essentially circumventing one of the most notorious numerical issues of Helmholtz. Also, small enough to allow for pollution-free spectral methods on each of them---without worrying about the bloated or full algebraic bandwidth that prevents their use with the state of the art domain decomposition.

True, the two-stage PDD approach above must be repeated for every Poisson iterate, instead of just once as would be the case if the complex Helmholtz equation admitted a probabilistic representation. A ``multigrid'' variance reduction technique, which recycles information from each iterate and has proved very successful \cite{PDDMultigrid}, should be incorporated to accelerate the Poisson iterates.

\paragraph{Negative results.}
The formulae derived in Section 2 also indicate that recasting 
\eqref{eq:H0} into a sequence of Poisson equations 
is seemingly only possible for low wavenumbers / frequencies. 

The assumptions $(\alpha+p)\bE[\tau^{\bf y}]<1$ for the cavity (Theorem \ref{lemma: second version conv int}) and $(\alpha\bE[\tau^{\bf y} ]+ k \bE[\xi^{\bf y} ])<1$ for the annulus (Theorem \ref{lemma: second version conv}), along with the condition $\alpha\geq k^2$, give upper wavenumbers $k_*$ below which the Poisson iteration is guaranteed to converge to the solution of its respective Helmholtz BVP. Setting $\alpha=k_*^2=k^2$, recall that they are (note that all expectations are positive):
\begin{align*}
k_*^C&= \sqrt{\frac{1}{\bE[\tau^{\bf y}]}-p}~, 
& \textrm{ for the Helmholtz BVP in the cavity, and}
\\
k_*^A&= \min\limits_{{\bf y}\in\Omega }\frac{-\bE[\xi^{\bf y} ]+\sqrt{\bE^2[\xi^{\bf y}] +4\bE[\tau^{\bf y} ]}}{2\bE[\tau^{\bf y} ]},
& \textrm{ for the Helmholtz BVP on the annular domain}.
\end{align*} 
Let $E_{\Omega}({\bf x})={\mathbb E}[\tau^{\Omega}({\bf x})]$. The expected first-exit time of a Brownian motion from $\Omega$ obeys the PDE $\Delta E_{\Omega}({\bf x})+2=0$,
along with homogeneous BCs of the same type (Dirichlet, Neumann or Robin) as the associated BVP has on its boundary $\partial\Omega$ \cite{eikonal}. It can be readily shown (by inspection) 
that $E_{\Omega}({\bf x})$
scales with the area of the domain, i.e. if $\Lambda>0$ and $\Omega'=\{\Lambda{\bf y}\,|\,{\bf y}\in\Omega\}$, then $E_{\Omega'}(\Lambda {\bf x})= \Lambda^2E_{\Omega}({\bf x})$. Therefore, the upper limit $k_*^A$ decreases roughly with the area of the computational domain as well. This means that PDD cannot be used with {\em large} computational domains at {\em high} frequencies---which are two of the features of the scattering problem causing the most difficulties, in the first place.

Regarding the cavity problem, the domain is fixed for all frequencies (since no computational boundary is required). However, the upper bound $k_*^C$ still behaves qualitatively bad, since it decreases with the amount of damping, $p$. (While artificial damping is typically increased at high frequencies, in order to counter the resonance issue outlined in the Introduction.)    

Another typical scenario in wave propagation is a waveguide---a conduit designed to allow only a given frequency through it. From the stochastic point of view, the analysis (and the discouraging conclusion) are similar to those of the scattering problem---see Appendix \ref{S:Waveguide} for an example.

In principle, the bounds $k_*^A$ and $k_*^C$ indicate just {\em sufficient} conditions for convergence, thus not {\em necessarily} precluding convergence at higher frequencies. 
The experiments on Table \ref{tab:my-table0} (as well as others not reported) strongly suggest that there is a maximum wavenumber $k_{max}$ in every case beyond which the Poisson iterations \eqref{eq: def of pde v2 int} and \eqref{eq: def of pde v2} do diverge. 
Moreover, $k_*^A$ and $k_*^C$ are sharpish lower bounds to $k_{max}$ in either case---for practical purposes, $k\leq k_*^A$ or $k\leq k_*^C$ can be regarded almost as a necessary condition for convergence. This observation is backed up by the semiheuristic analysis carried out in Section \ref{SS:Algebra} for finite difference discretisations, which leverages completely independent arguments from those used to derive $k_*^A$ and $k_*^C$. 

Therefore, the iterations proposed in this paper (and  PDD) fail to fix any of the numerical challenges posed by Helmholtz equations in wave propagation. While the former could certainly be implemented in tame scenarios (small domains and low frequencies), they do not seem to offer any particular advantage over standard methods there---but a significantly more complex formulation.   

Summing up, despite its unusual and appealing features, the final verdict on the approach studied in this paper must be negative. The ultimate reason is the harmful effect of frequency and domain size on convergence through the expected first-exit times. While we have not been able to fix this, we finish this paper by briefly commenting on a few plausible ideas for further work.

First-exit times might be shortened (hence lifting $k_*$) by adding artificial drifts (i.e. first-derivative terms) to \eqref{eq:H0}, which push trajectories towards the Dirichlet BCs. In the Feynman-Kac setting, the connection between that modified problem and the Helmholtz equation would be established via Girsanov's theorem \cite{Milstein&Tretyakov04}. However, the convergence analysis in Sections \ref{SS:Cavity} and \ref{SS:Annular} becomes quite involved.

We have also tried, unsuccessfully, to improve on the qualitative behaviour of $k_*^A$ by designing a different iteration. We give an example of a sensible attempt in Appendix \ref{S:Fail}.

Perhaps the more promising idea might be replacing the fixed parameter $\alpha$ in the iterations \eqref{eq:naive_iter} by a function $\alpha({\bf x})$, such that $k^2-\alpha({\bf x})$ may be positive in a region of $\Omega_c$ (but not everywhere). This must be done in such a way that $\EE[\tau]$ and the variance are finite. We are not aware of any theoretical fact banishing this possibility nor are we of counterexamples in the literature.

\paragraph{Significance.}  
Our detailed analysis sheds a bittersweet conclusion: the notion we propose is feasible, but only at low frequencies. This need not be the last word on the approach, as the Feynman-Kac theory for elliptic BVPs with partially positive reaction coefficients is essentially non-existent. Nonetheless, and to the best of our knowledge, it does hold insofar as the currently available theory is concerned.

Given the originality of our approach, so different from the state of the art, we believe that our results are worth reporting to the broader community of applied mathematicians and engineers, where they could spur new---even out of the box---ideas.

\appendix
\section{Failed alternative iteration for annular domain}\label{S:Fail}

In this section, we consider the following alternative iterated BVP system for \eqref{eq: origin helmholtz}:
\begin{align}
\nonumber
    {\bf x}\in  \Omega,
    \begin{cases}
        \Delta v_0-(\alpha-k^2) v_0  =0,
        \\
        \Delta v_1 -(\alpha-k^2) v_1  =-\alpha v_0 ,
        \\
        \vdots
        \\
        \Delta v_n -(\alpha-k^2) v_n  =-\alpha v_{n-1},
        \\
        \vdots
    \end{cases}
    \quad
    \begin{cases}
        \Delta w_0-(\alpha-k^2) w_0  =0,
        \\
        \Delta w_1 -(\alpha-k^2) w_1  =-\alpha w_0 ,
        \\
        \vdots
        \\
        \Delta w_n -(\alpha-k^2) w_n  =-\alpha w_{n-1},
                \\
        \vdots
    \end{cases}
\end{align}
with the BCs 
\begin{align}
\label{eq: bc1 of pde v1}
  {\bf x}=(x_1,x_2)\in   \partial \Omega_1,~
    \begin{cases}
          v_0  =-\cos (kx_1),
        \\
         v_n =0, ~n\geq 1,
    \end{cases}
    \qquad \quad 
    \begin{cases}
          w_0  =\sin (kx_1),
        \\
         w_n =0, ~n\geq 1,
    \end{cases}
\end{align}
\begin{align}
\label{eq: bc2 of pde v1}
  {\bf x}\in   \partial  \Omega_2,
    \begin{cases}
          v_0  =0,
        \\
         v_n =-\frac{1}{k} \frac{\partial w_{n-1}}{\partial \bn}, ~n\geq 1,
    \end{cases}
    \qquad \quad 
    \begin{cases}
         \frac{\partial w_0}{\partial \bn}=0,
        \\
        \frac{\partial w_n}{\partial \bn}=k v_{n-1},  ~n\geq 1.
    \end{cases}
\end{align}
The difference between this version and the version in \eqref{eq: def of pde v2}-\eqref{eq: bc2 of pde v2} lies in the boundary condition for $v_n$, for which Dirichlet-only BCs are enforced on both $\partial\Omega_1$ and $\partial\Omega_2$ in \eqref{eq: bc1 of pde v1}-\eqref{eq: bc2 of pde v1}. The point is that, since the corresponding Feynman-Kac diffusion is now absorbed rather than reflected on $\partial\Omega_2$, its expected first exit time is shorter, and this might raise the upper end of the range of workable $\alpha$.

Let us introduce the two-dimensional standard Brownian motion $({\bf W}_{1,t})_{t\geq 0}$, the $\bR^2$ process $({\bf X}_t)_{t\geq 0 }$ absorbed on $\partial\Omega=\partial\Omega_1\cup\partial\Omega_2$, equipped with the standard probability space, such that
\begin{align}
   \nonumber
   \dd {\bf X}_t=\1_{\{{\bf X}_t\in\Omega\}}\sqrt{2}\, \dd {\bf W}_{1,t}, \qquad {\bf X}_0={\bf x} \in \Omega,
\end{align}
and its associated first exit time
\begin{align}
    \nonumber
	\tau_1^{\bf x}= \arg\min \{t>0\,|\,{\bf X}_t\in\partial\Omega\}.
\end{align}
Then, for $n\geq 2$, for all ${\bf x}\in \Omega$, we have 

    \begin{align}
     v_n({\bf x})&=\bE \Big[  \int_0^{\tau_1^{\bf x} } \alpha v_{n-1} ( {\bf X }_s^x)  e^{-(\alpha-k^2)s} \dd s
    \Big]
    +
    \bE \Big[  v_{n-2} ( {\bf X }_s^x)  e^{-(\alpha-k^2)\tau_1^{\bf x}} \1_{\{ X_{\tau_1^{\bf x}}\in \partial \Omega_2 \}}
    \Big]
    \nonumber\\
    &\leq ||v_{n-1}||_\infty\bE \Big[  \int_0^{\tau_1^{\bf x} } \alpha   e^{-(\alpha-k^2)s} \dd s
    \Big]
    +||v_{n-2}||_\infty
    \bE \Big[   e^{-(\alpha-k^2)\tau_1^{\bf x}} \1_{\{ X_{\tau_1^{\bf x}}\in \partial \Omega_2 \}}
    \Big]
    \nonumber\\
     &\leq \alpha ||v_{n-1}||_\infty\bE \Big[     \frac{1-e^{-(\alpha-k^2)\tau_1^{\bf x} }}{ \alpha-k^2}
    \Big]
    +||v_{n-2}||_\infty
    \bE \Big[   e^{-(\alpha-k^2)\tau_1^{\bf x}} \1_{\{ X_{\tau_1^{\bf x}}\in \partial \Omega_2\} }
    \Big]
    \nonumber
    \\\
     &\leq \alpha ||v_{n-1}||_\infty\bE \big[      \tau_1^{\bf x}  
    \big]
    +||v_{n-2}||_\infty
    \bP( X_{\tau_1^{\bf x}}\in \partial \Omega_2   ).
    \nonumber
\end{align}
In order for
$v_n$ to satisfy  $\lim_{n\rightarrow\infty} ||v_n||_\infty=0$, one would need   a condition that $\alpha \bE[\tau_1^{\bf x} ]+\bP( X_{\tau_1^{\bf x}}\in \partial \Omega_2 )<1 ~ \Rightarrow ~\alpha \bE[\tau_1^{\bf x} ] < \bP( X_{\tau_1^{\bf x}}\in \partial \Omega_1 )$, using that $\bP( X_{\tau_1^{\bf x}}\in \partial \Omega_1 )+ \bP( X_{\tau_1^{\bf x}}\in \partial \Omega_2 )=1$ . This condition restricts the feasible range of $\alpha$ to a narrower extent compared to the range established in Theorem \ref{lemma: second version conv}. This is primarily due to the relatively small probability $\bP( X_{\tau_1^{\bf x}}\in \partial \Omega_1 )$ associated with the annular geometry, where by design, the perimeter of $\partial\Omega_1$ is much shorter than that of $\partial\Omega_2$.

\section{Waveguide}\label{S:Waveguide}

The waveguide problem we consider is the rectangle $[0,1]\times[0,L_{wid}]$ with boundary $ \partial \Omega_{up}\cup \partial\Omega_{down}\cup\partial \Omega_{left} \cup\partial\Omega_{right}  $, corresponding to the four rectangle sides,  satisfying the BVP: 

\begin{align}
    \nonumber
        \begin{cases}
        \Delta u+k^2 u =0,
        &{\bf x}=(x_1,x_2)\in  \Omega,
        \\
        u=0, &{\bf x}\in \partial \Omega_{up} \cup \partial \Omega_{down} ,
        \\
        \frac{\partial u}{\partial \bn}+ik u= 2i \beta_m \sin(\alpha_m x_2) , &{\bf x}\in \partial \Omega_{left}, 
        \\
        \frac{\partial u}{\partial \bn}-ik u=0 , &{\bf x}\in \partial \Omega_{right },
    \end{cases}
\end{align}
where $\alpha_m=m \pi/L_{wid},~\beta_m=\sqrt{k^2 -\alpha_m^2  }$ and $m\in \bN$ (see \cite{waveguide} for details). Since the waveguide is elongated in the horizontal direction, $L_{wid}<1$. Even if the geometry of the waveguide is not annular, the problem is mathematically similar to the problem in Section \ref{SS:Annular}. Therefore, we propose the same iteration of PDEs \eqref{eq: def of pde v2}, but replace the BCs \eqref{eq: bc1 of pde v2} and \eqref{eq: bc2 of pde v2} by

\begin{align}
\label{eq: bc1 of pde v2 - waveguide}
   &{\bf x}\in  \partial \Omega_{up} \cup \partial\Omega_{down},
    \quad 
          v_n =w_n = 0 ,  ~n\geq 0,
\\
  &{\bf x}\in   \partial \Omega_{left},
    \begin{cases}
         \frac{\partial v_0}{\partial \bn}=0,
        \\
        \frac{\partial v_n}{\partial \bn}=k w_{n-1},  ~n\geq 1,
    \end{cases}
    \qquad \quad 
    \begin{cases}
         \frac{\partial w_0}{\partial \bn}=2 \beta_m \sin(\alpha_m x_2),
        \\
        \frac{\partial w_n}{\partial \bn}=-k v_{n-1},  ~n\geq 1,
    \end{cases}
    \\
  &{\bf x}\in   \partial\Omega_{right} ,
    \begin{cases}
         \frac{\partial v_0}{\partial \bn}=0,
        \\
        \frac{\partial v_n}{\partial \bn}=-k w_{n-1},  ~n\geq 1,
    \end{cases}
    \qquad \quad 
    \begin{cases}
         \frac{\partial w_0}{\partial \bn}=0,
        \\
        \frac{\partial w_n}{\partial \bn}=k v_{n-1},  ~n\geq 1.
    \end{cases}
\end{align}
Replicating the proof of Theorem \ref{lemma: second version conv},  we shall end up with a similar constraint that the iterations converge to the true solution if $(\alpha\bE[\tau_3^{\bf y} ]+ k \bE[\xi^{\bf y} ])   <1$, where $\tau_3^{\bf y}=\inf\{t>0\,|\,{\bf Y}_t^{\bf y}\in \partial \Omega_{up} \cup \partial\Omega_{down} \}$ is a stopping time similar to $\tau^{\bf y}$ defined \ref{lemma: second version conv}. For $m=1$, an experiment shows that we cannot get convergence for any $k$. Now, we prove that this condition is not attainable for $m\geq 2$, either. It is known that 
the expected exit time of  a two-dimensional Brownian motion from  the center of a ball of radius $R$ is $R^2/2$ \cite{alma99964413502466}. By symmetry, this is also the largest expected exit time from the ball. Then, the largest expected exit time of the scaled Brownian motion $({\bf Y}^{\bf y}_t)_{t\geq 0}$, defined in \eqref{eq:def_Y_t}, from the circle of radius $R=L/2$ concentric with---and inscribed in---the rectangle $\Omega$ is $L^2/16$. Consequently, $\sup_{y\in \Omega}\bE[\tau_3^{\bf y} ]> L_{wid}^2/16$. 
Additionally, by considering the iterated PDE, 
it follows that $k^2>\alpha_m^2\geq 4\pi^2 /L_{wid}^2$.
Then, $ \sup_{y\in \Omega}\alpha\bE[\tau_3^{\bf y} ] \geq k^2 \sup_{y\in \Omega}\bE[\tau_3^{\bf y} ] \geq 4\pi^2/16>1$, which violates the constraints. In sum, the iterative scheme cannot be guaranteed to converge for any value of $k$.


%



%
%


\begin{thebibliography}{10}

\bibitem{Acebron_PDD}
J.~A. Acebr{\'o}n, M.~P. Busico, P.~Lanucara, and R.~Spigler.
\newblock Domain decomposition solution of elliptic boundary-value problems via
  {M}onte {C}arlo and quasi-{M}onte {C}arlo methods.
\newblock {\em SIAM Journal on Scientific Computing}, 27(2):440--457, 2005.

\bibitem{eikonal}
F.~Bernal.
\newblock An implementation of {M}ilstein’s method for general bounded
  diffusions.
\newblock {\em Journal of Scientific Computing}, 79(2):867--890, 2019.

\bibitem{PDDMultigrid}
F.~Bernal and J.~A. Acebr\'{o}n.
\newblock A multigrid-like algorithm for probabilistic domain decomposition.
\newblock {\em Comput. Math. Appl.}, 72(7):1790--1810, 2016.

\bibitem{smith2017HybridPDEBranching}
F.~Bernal, G.~dos Reis, and G.~Smith.
\newblock Hybrid {PDE} solver for data-driven problems and modern branching.
\newblock {\em European J. Appl. Math.}, 28(6):949--972, 2017.

\bibitem{Budaev2001}
B.~Budaev and D.~Bogy.
\newblock Probabilistic solutions of the {H}elmholtz equation.
\newblock {\em The Journal of the Acoustical Society of America},
  109(5):2260--2262, 2001.

\bibitem{Carleman}
T.~Carleman.
\newblock {\em {\"U}ber die asymptotische Verteilung der Eigenwerte partieller
  Differentialgleichungen}.
\newblock Berichten der mathematisch-physisch Klasse der Sächsischen Akad. der
  Wissenschaften zu Leipzig, LXXXVIII Band, Sitsung v.15, 1936.

\bibitem{Courant&Hilbert}
R.~Courant.
\newblock {\em Methods of mathematical physics. Vol. 2, Partial differential
  equations}.
\newblock Wiley classics library. Interscience Publishers, New York, 1989.

\bibitem{pollution}
A.~Deraemaeker, I.~Babu{\v{s}}ka, and P.~Bouillard.
\newblock Dispersion and pollution of the {FEM} solution for the {H}elmholtz
  equation in one, two and three dimensions.
\newblock {\em International journal for numerical methods in engineering},
  46(4):471--499, 1999.

\bibitem{alma99964413502466}
R.~Durrett.
\newblock {\em Brownian motion and martingales in analysis}.
\newblock Wadsworth Mathematics Series. Wadsworth Advanced Books \& Software,
  Belmont, California, 1984.

\bibitem{Erlangga}
Y.~A. Erlangga.
\newblock Advances in iterative methods and preconditioners for the {H}elmholtz
  equation.
\newblock {\em Archives of Computational Methods in Engineering}, 15:37--66,
  2008.

\bibitem{Gander}
O.~G. Ernst and M.~J. Gander.
\newblock Why it is difficult to solve {H}elmholtz problems with classical
  iterative methods.
\newblock {\em Numerical analysis of multiscale problems}, 83:325--363, 2012.

\bibitem{Freidlin85}
M.~Freidlin.
\newblock {\em Functional integration and partial differential equations},
  volume 109 of {\em Ann. Math. Stud.}
\newblock Princeton University Press, Princeton, NJ, 1985.

\bibitem{Funaki1979}
T.~Funaki.
\newblock Probabilistic construction of the solution of some higher order
  parabolic differential equation.
\newblock {\em Proc. Japan Acad. Ser. A Math. Sci.}, 55(5):176--179, 1979.

\bibitem{Givoli_BCs}
D.~Givoli.
\newblock {\em Numerical methods for problems in infinite domains / Dan
  Givoli.}
\newblock Studies in applied mechanics 33. Elsevier, 1992.

\bibitem{Harari_BC}
I.~Harari and T.~J. Hughes.
\newblock Finite element methods for the {H}elmholtz equation in an exterior
  domain: model problems.
\newblock {\em Computer methods in applied mechanics and engineering},
  87(1):59--96, 1991.

\bibitem{levylocaltime}
I.~Karatzas and S.~E. Shreve.
\newblock {\em Brownian motion and stochastic calculus}, volume 113 of {\em
  Graduate Texts in Mathematics}.
\newblock Springer-Verlag, New York, second edition, 1991.

\bibitem{2015fkformula}
R.~Kawai.
\newblock Solution bounds for elliptic partial differential equations via
  {F}eynman-{K}ac representation.
\newblock {\em Stoch. Anal. Appl.}, 33(5):844--862, 2015.

\bibitem{illposedproblembook}
A.~Kirsch.
\newblock {\em An introduction to the mathematical theory of inverse problems},
  volume 120 of {\em Applied Mathematical Sciences}.
\newblock Springer, New York, second edition, 2011.

\bibitem{waveguide}
E.~Larsson and U.~Sundin.
\newblock An investigation of global radial basis function collocation methods
  applied to {H}elmholtz problems.
\newblock {\em Dolomites Research Notes on Approximation}, 13:65--85, 2020.

\bibitem{Milstein&Tretyakov04}
G.~N. Milstein and M.~V. Tretyakov.
\newblock {\em Stochastic numerics for mathematical physics}, volume~39.
\newblock Springer, 2004.

\bibitem{Schlottmann1999}
R.~Schlottmann.
\newblock A path integral formulation of acoustic wave propagation.
\newblock {\em Geophysical Journal, Volume 137, Issue 2, pp. 353-363.},
  137:353--363, 1999.

\bibitem{smith1996parallel}
B.~F. Smith, P.~Bj{\o}rstad, and W.~D. Gropp.
\newblock {P}arallel multilevel methods for elliptic partial differential
  equations.
\newblock {\em Domain Decomposition}, 1996.

\bibitem{Meshless}
S.~Suleau, A.~Deraemaeker, and P.~Bouillard.
\newblock Dispersion and pollution of meshless solutions for the {H}elmholtz
  equation.
\newblock {\em Computer methods in applied mechanics and engineering},
  190(5-7):639--657, 2000.

\bibitem{Zienkiewicz_Unsolved}
O.~C. Zienkiewicz.
\newblock Achievements and some unsolved problems of the finite element method.
\newblock {\em International Journal for Numerical Methods in Engineering},
  47(1-3):9--28, 2000.

\end{thebibliography}
\bibliographystyle{abbrv}

\end{document}